\theoremstyle{plain}
\newtheorem{thm}{Theorem}[section]
\newtheorem{lemma}[thm]{Lemma}
\newtheorem{prop}[thm]{Proposition}
\newtheorem{cor}[thm]{Corollary}
\theoremstyle{definition}
\newtheorem{defn}[thm]{Definition}
\newtheoremstyle{theoremnumstyle}
    {\topsep}{\topsep}              
    {\itshape}                      
    {}                              
    {\bfseries}                     
    {.}                             
    { }                             
    {\thmname{#1}\thmnote{ \bfseries #3}}
\theoremstyle{theoremnumstyle}
\newcommand{\from}	{\ensuremath{\colon}}
\renewcommand{\to}	{\ensuremath{\rightarrow}}
\newcommand{\Sn}{{\ensuremath{\mathrm{S}_n}}}
\newcommand{\Sym}{{\ensuremath{\mathrm{S}}}}
\newcommand{\Alt}{{\ensuremath{\mathrm{A}}}}
\newcommand{\myhat}[1]    {\ensuremath{\widehat{#1}}}
\let\oldGamma\Gamma
\renewcommand{\Gamma}{{\hspace{0.5pt} \oldGamma}}
\DeclareMathOperator{\sym}{Sym}
\DeclareMathOperator{\soc}{soc}
\DeclareMathOperator{\aut}{Aut}
\DeclareMathOperator{\out}{Out}
\DeclareMathOperator{\Norm}{{\mathrm{N}}}
\newcommand{\intersection}{\mathbin{\cap}}
\newcommand{\abs}[1]	{%
	\ensuremath{
		\left| #1 \right|
	}%
}
\newcommand{\gen}[1]	{
	\ensuremath{
		\left\langle \, #1 \, \right\rangle
	}
}
\newcommand{\set}[1]	{
	\ensuremath{
		\left\{ #1 \right\}
	}
}
\begin{document}
\title{Normalisers of primitive permutation groups in quasipolynomial time}
\author{Colva M.  Roney-Dougal and Sergio Siccha}

\maketitle

\begin{abstract}
We show that given generators for subgroups $G$ and $H$ 
of $\Sn$, if $G$ is primitive 
then generators for $\Norm_H(G)$ may be computed in quasipolynomial
time, namely 
$2^{O(\log^3 n)}$. The previous best known bound was simply
exponential. 
\end{abstract}

\section{Introduction and outline}\label{sec:intro}

The \emph{normaliser problem} has as input generating sets $X$ and $Y$
for subgroups $G$ and $H$ of $\Sn$, and asks one to return generators
for $\Norm_H(G)$. 
The asymptotically best known  algorithm is due to Wiebking
\cite{wiebking:normalisers-simply-exponential},
and runs in simply exponential time $2^{O(n)}$. In
practice, there are no efficient general algorithms to solve the
normaliser problem.

Any generating set $X$ for a
subgroup $G$ of $\Sn$ can be reduced to one of size at most $n$ in
time $O(|X|n^2 + n^5)$ (see Lemma~\ref{lem:complexity-compendium}(a)),
so we shall generally assume that our generating sets have size at
most $n$.   A problem about subgroups of $\Sym_n$, input via their generating
sets,  can 
be solved in \emph{polynomial time}
if there exists an algorithm to solve it in time bounded polynomially in $n$. 
It can be solved in \emph{quasipolynomial} time if there exists an
algorithm to solve it in time bounded by $2^{O(\log^c
  n)}$, for some absolute constant $c$.

Polynomial time solutions are known for many 
permutation group problems.
However, in addition to the normaliser problem, polynomial time algorithms for
 computing set stabilisers, centralisers and intersections
of  permutation groups have so far proven
elusive.
Amongst these problems, the normaliser problem occupies a special place.
Luks showed  in \cite{luks:polytime} that the other problems listed above
are polynomial time equivalent: we 
call their polynomial time equivalence class the
\emph{Luks class}.
Luks also proved in \cite{luks:polytime} that 
the set-stabiliser problem is
polynomial time reducible to the normaliser problem, 
but no reduction in the other direction is known.

Although the normaliser problem appears to be harder than the problems
in the Luks class, they are not without
similarities.
Luks showed that 
the set-stabiliser problem is solvable in polynomial time if
one restricts the composition factors of the group in question.
This yielded a polynomial time algorithm for testing isomorphism of
graphs with bounded degree \cite{luks:isomorphism-graphs-bounded-valence}.
Almost 30 years later, Luks and Miyazaki were able to show that the same holds
for the normaliser problem: if one restricts the composition factors of
$H$ then one can compute $\Norm_H(G)$ in polynomial time
\cite{luks-miyazaki:normalisers-ptime}.

The majority of our paper consists of the proof of the following
theorem, concerning the solution of the normaliser problem in $\Sn$.
\begin{thm}\label{thm:normsym-qpoly}
    Let $G = \gen X \leq \Sn$ be a primitive permutation group. 
    Then $\Norm_\Sn(G)$ can be computed in time
    $|X| \cdot 2 ^ {O( \log ^ 3 n )}$.
\end{thm}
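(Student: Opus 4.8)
The plan is to exploit the O'Nan--Scott theorem, which classifies primitive groups into a small number of structural types, and to handle each type separately. The central idea is that for a primitive group $G \leq \Sn$, the normaliser $\Norm_\Sn(G)$ is constrained by the structure of the socle $\soc(G)$, which is a characteristic subgroup of $G$ and hence normalised by $\Norm_\Sn(G)$. First I would compute $\soc(G)$ --- this can be done in polynomial time once we know the composition factors --- and determine which O'Nan--Scott type $G$ has. In the affine case, $\soc(G) = V$ is elementary abelian of order $p^d = n$, and $G \leq \mathrm{AGL}_d(p)$; here $\Norm_\Sn(G)$ also lies in $\mathrm{AGL}_d(p)$ (since it normalises $V$, which is its own centraliser and acts regularly), so the problem reduces to a linear-algebraic normaliser computation inside $\mathrm{GL}_d(p)$, which is tractable because $d = O(\log n)$.

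For the non-affine types, the socle is a product $T^k$ of nonabelian simple groups, and the key structural fact is that $\Norm_\Sn(G)$ permutes the $k$ simple direct factors of $\soc(G)$ and induces a subgroup of $\aut(T) \wr \Sym_k$ on $\soc(G)$, sitting inside a primitive group of the same socle. The next step is to reduce the ambient degree: using the embedding $T^k \into T \wr \Sym_k$ and the product action or diagonal action, one can often replace the degree-$n$ problem by a normaliser computation on a much smaller domain, together with bookkeeping about how the factors are glued. The main technical engine here should be the bound $\abs{G} \leq n^{O(\log n)}$ for primitive $G$ of degree $n$ (Maróti, Cameron), which forces $k = O(\log n)$ in the diagonal and product-action types; this is what ultimately puts everything in the quasipolynomial regime, since we can afford to enumerate structures of size $2^{O(\log^2 n)}$.

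The heart of the argument, and the main obstacle, is the product-action and twisted-wreath cases, where $\soc(G) \leq (\soc(K))^\ell$ for some primitive group $K$ of smaller degree acting on a Cartesian decomposition of the $n$ points. Here one wants to recurse: compute the normaliser of the component group $K$, then lift through the wreath product. The difficulty is that $\Norm_\Sn(G)$ need not respect a given Cartesian decomposition --- different decompositions preserved by $G$ must be identified and the normaliser must permute them --- so one needs the machinery of Cartesian decompositions (Kovács, Praeger--Schneider) to enumerate all $G$-invariant Cartesian systems, show there are quasipolynomially many, and organise the normaliser as an extension of a ``base'' part (normalising each component) by a ``top'' part (permuting components and decompositions). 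Controlling the number of such decompositions and proving that the recursion depth is $O(\log n)$ with only a quasipolynomial branching factor at each level is where the $2^{O(\log^3 n)}$ bound --- one factor of $\log n$ from recursion depth, two from the per-level cost --- should emerge.

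Finally, I would assemble the pieces: in every O'Nan--Scott type we have reduced, in quasipolynomial time, to (i) a normaliser computation in a group of composition factors bounded enough for the Luks--Miyazaki algorithm \cite{luks-miyazaki:normalisers-ptime} to apply, namely inside $\aut(\soc(G))$ and its natural overgroups, plus (ii) a polynomially bounded amount of linear algebra over small fields, plus (iii) enumeration of quasipolynomially many combinatorial structures (block systems, Cartesian decompositions, factor pairings). Combining these and verifying that the intermediate groups constructed are still small-base or otherwise amenable completes the proof.
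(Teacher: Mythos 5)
Your plan takes a genuinely different route from the paper (which avoids the full O'Nan--Scott analysis and instead uses Mar\'oti's theorem to split primitive groups into small, large and almost simple), but as it stands it has a gap at its load-bearing step: the final reduction to Luks--Miyazaki. The polynomial-time normaliser algorithm of \cite{luks-miyazaki:normalisers-ptime} requires the group one normalises \emph{in} to have bounded composition factors, and precisely in the hardest cases --- $\soc G \cong \Alt_m^\ell$ acting componentwise on $\ell$-tuples of $k$-element subsets, i.e.\ the large groups of Definition~\ref{def:large} --- the relevant overgroup $\aut(T)\wr\Sym_\ell$ has alternating composition factors $\Alt_m$ of unbounded degree, so that machinery simply does not apply, and ``bounded enough composition factors'' is false exactly where it is needed. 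Some replacement is required there: the paper's replacement is to construct $\Norm_\Sn(\soc G)$ explicitly, by building a permutation isomorphism onto $\Sym_{k,m}\wr\Sym_\ell$ in product action (Lemma~\ref{lem:normsym-soc-pa-poly}), and then to test the cosets of $G$ in this normaliser one by one, which is feasible only because $[\Norm_\Sn(\soc G):G]\leq 2^{O(\log n\log\log n)}$ via the Guralnick--Mar\'oti--Pyber bound $\abs{\out T}\leq\kappa\sqrt{\abs\Delta}$ (Propositions~\ref{prop:pa-small-index-socle-in-norm} and~\ref{prop:norm_large_PA}). Nothing in your sketch supplies either the explicit construction of the socle normaliser or the index bound that makes coset testing affordable.

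Second, your quantitative claims are asserted rather than proved: that there are quasipolynomially many $G$-invariant Cartesian decompositions, that the recursion has depth $O(\log n)$ with per-level cost $2^{O(\log^2 n)}$, and that these combine to $2^{O(\log^3 n)}$ --- you yourself write only that the bound ``should emerge''. The paper's $\log^3 n$ comes from an entirely different and elementary count: for non-large $G$ there is a generating set of size at most $\max\{2,\log n\}$ (Holt--Roney-Dougal) and a base of size at most $2\lfloor\log n\rfloor+26$ (Corollary~\ref{cor:base_size}, from Halasi--Liebeck--Mar\'oti), and one enumerates the $n^{(t+1)b}=2^{O(\log^3 n)}$ candidate images of base points and generators (Theorem~\ref{thm:normsym-backtrack}). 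Your plan uses neither of these smallness facts nor any substitute, so no quasipolynomial bound on the search space is actually established. The affine branch of your argument is essentially fine (since $\Norm_\Sn(G)\leq\mathrm{AGL}_d(p)$ and $\abs{\mathrm{AGL}_d(p)}\leq n^{1+\log n}$, even brute force is quasipolynomial), but the diagonal, product-action and twisted wreath branches rest on the unproved enumeration, recursion and Luks--Miyazaki claims, so the proof is incomplete exactly where the theorem is hard.
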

\begin{proof}
This will follow from Theorem~\ref{thm:normsym-small-qpoly} if $G$ is not
large  (see
Definition~\ref{def:large}), 
and from Corollary~\ref{cor:normsym-pa-qpoly} if $G$ is large.
\end{proof}

The \emph{string isomorphism} problem
(as stated in \cite[Definition 3.2]{babai:graphiso-quasipolynomial}) asks one
to find the elements $\sigma$ in a group $G \leq \Sn$ that map a function $f: \{1,
\ldots, n\} \rightarrow \Sigma$ to a function $g: \{1, \ldots, n\}
\rightarrow \Sigma$, where the action is
 $(i)f^{\sigma} = (i^{\sigma^{-1}})f$. In a dramatic
breakthrough, Babai proved  (see
\cite{babai:graphiso-quasipolynomial,
  babai:NEWgraphiso-quasipolynomial}) that the string isomorphism
problem
can be solved in quasipolynomial time. Babai also gave a polynomial
time reduction from the  graph isomorphism problem to the string
isomorphism problem, and hence showed that the graph isomorphism
problem can also be solved in
quasipolynomial time. 
The set
stabiliser problem is  a special case of the string isomorphism
problem, 
where $f = g$ and
 $|\Sigma| = 2$, and so it also follows  that all problems in the Luks class
 can be solved in quasipolynomial time. 

Helfgott went on to prove
a more precise bound in \cite{helfgott}, namely 
that the string isomorphism problem can be solved
in time
$2^{O(\log^3 n)}$.
Hence,  given subgroups $G$ and
$H$ of $\Sn$, 
the intersection
$G \intersection H$
can be computed in time $2^{O(\log^3 n)}$. 
Our main result now follows immediately:

\begin{thm}\label{thm:norm-qpoly}
    Let $G = \gen X \leq \Sn$ and $H = \gen Y \leq \Sn$ be permutation
    groups, with $G$ primitive.
    Then $\Norm_H(G)$ can be computed in time $(\abs X + \abs Y) 2^{O(\log^3 n)}$.
\end{thm}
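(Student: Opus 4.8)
The plan is to reduce the whole computation to one application of Theorem~\ref{thm:normsym-qpoly} followed by a single group intersection. The starting point is the elementary identity
\[
    \Norm_H(G) = H \intersection \Norm_\Sn(G),
\]
valid because an element $h \in H$ satisfies $h^{-1}Gh = G$ exactly when it does so as an element of $\Sn$. So it suffices to compute $N := \Norm_\Sn(G)$ and then intersect the result with $H$.

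For the first step, since $G$ is primitive we may invoke Theorem~\ref{thm:normsym-qpoly} to obtain a generating set for $N$ in time $\abs X \cdot 2^{O(\log^3 n)}$. I would then apply Lemma~\ref{lem:complexity-compendium}(a) to replace this generating set for $N$, and also the input generating set $Y$ for $H$, by generating sets of size at most $n$; this costs only polynomial time, that is, $(\abs X + \abs Y)\cdot 2^{O(\log n)}$, which is absorbed into the final bound. For the second step I would use the quasipolynomial solution of the intersection problem recorded in the discussion preceding the statement: by Babai's string isomorphism algorithm together with Helfgott's sharpening to $2^{O(\log^3 n)}$, given $N$ and $H$ presented by at most $n$ generators each, one can compute $N \intersection H$ in time $2^{O(\log^3 n)}$. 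Adding the three contributions gives the claimed bound $(\abs X + \abs Y)\cdot 2^{O(\log^3 n)}$.

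There is no real obstacle beyond the results already in hand: Theorem~\ref{thm:norm-qpoly} is a direct corollary of Theorem~\ref{thm:normsym-qpoly} and the cited quasipolynomial intersection algorithm. The only points that need a little care are the bookkeeping of the dependence on $\abs X$ and $\abs Y$ — dealt with by the generating-set reduction of Lemma~\ref{lem:complexity-compendium}(a), so that the intersection routine is always called on inputs of size $O(n)$ and therefore contributes only a $2^{O(\log^3 n)}$ factor rather than anything growing with $\abs Y$ — and noting that the $\abs X$ factor from Theorem~\ref{thm:normsym-qpoly} dominates the cost of all subsequent work on $N$.
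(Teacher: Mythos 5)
Your proposal is correct and follows exactly the paper's intended argument: the identity $\Norm_H(G) = H \intersection \Norm_\Sn(G)$, one application of Theorem~\ref{thm:normsym-qpoly}, and then the quasipolynomial intersection via Babai--Helfgott, which is precisely why the paper states that the theorem ``follows immediately'' from the preceding discussion. The extra bookkeeping with Lemma~\ref{lem:complexity-compendium}(a) is a sensible way to make the dependence on $\abs X$ and $\abs Y$ explicit.
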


A \emph{base} of a permutation group $G \leq \Sn$ is a sequence $B$ of
elements of $\{1, \ldots, n\}$, such that the pointwise stabiliser in
$G$ of
$B$ is trivial. We write $b(G)$ for the size of the smallest base for
$G$.  A 
technical tool in our proof, which may be of independent interest,
 is the following easy corollary of
beautiful recent work \cite{duyan-halasi-maroti, liebeck-halasi-maroti} 
of Duyan, Halasi, Liebeck and Mar\'oti. This bound is noted in
\cite[Proof of Corollary 1.3]{liebeck-halasi-maroti}, but we state and
prove it
in slightly more generality. 

\begin{cor}\label{cor:base_size}
Let $G$ be primitive and not large. Then 
$$b(G) \leq 2 \lfloor \log n
\rfloor + 26.$$
\end{cor}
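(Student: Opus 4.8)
The plan is to combine two powerful external results: the explicit base-size bound of Halasi, Liebeck and Mar\'oti \cite{liebeck-halasi-maroti} (which rests on the proof of Pyber's conjecture by Duyan, Halasi and Mar\'oti \cite{duyan-halasi-maroti}), bounding $b(G)$ in terms of $\abs G$, together with Mar\'oti's bound \cite{maroti:orders-primitive-groups} on $\abs G$ in terms of the degree $n$. The role of the hypothesis ``not large'' is precisely to make the second input effective: for the large subset- and product-action groups $\abs G$ can be as big as roughly $(m!)^r$ on only $\binom{m}{k}^r$ points, and (as the simple example of $\Sym_m$ acting on $2$-subsets shows) their minimal bases have size polynomial in $n$, so no bound of the desired shape can hold for them.

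Concretely, I would first quote the base-size bound for the primitive group $G$ at hand in the form $b(G) < 2\log\abs G / \log n + 24$. I would then apply Mar\'oti's theorem: since $G$ is primitive and not large, either $G$ is one of the four Mathieu groups $M_{11}$, $M_{12}$, $M_{23}$, $M_{24}$ in its natural multiply transitive action, or
$$\abs G \;\le\; n \cdot \prod_{i=0}^{\lfloor \log n\rfloor - 1}\bigl(n - 2^i\bigr) \;<\; n^{\,1 + \lfloor \log n\rfloor}.$$
(Depending on the exact wording of Definition~\ref{def:large}, there may be finitely many further small-degree groups to list here; all such, like the Mathieu groups, have bounded degree and $b(G) \le 7$, so satisfy the claimed inequality by inspection --- for instance $2\lfloor \log 24\rfloor + 26 = 34$.) In the main case, substituting the order bound into the base-size bound gives
$$b(G) < 2\cdot\frac{(1 + \lfloor \log n\rfloor)\log n}{\log n} + 24 = 2\lfloor \log n\rfloor + 26,$$
and since $b(G)$ is an integer we conclude $b(G) \le 2\lfloor \log n\rfloor + 26$ (in fact with a point to spare).

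Essentially all of this is quotation; the one place that needs care --- and the reason the result is stated in ``slightly more generality'' than \cite[Proof of Corollary~1.3]{liebeck-halasi-maroti} --- is verifying that the notion of ``large'' fixed in Definition~\ref{def:large} is compatible with Mar\'oti's trichotomy, so that every primitive group that is not large in our sense really does fall into the generic order-bounded case (or into the short, explicitly handled list of small groups). I expect that compatibility check, plus dispatching the handful of small exceptions, to be the only genuine work, and it is routine.
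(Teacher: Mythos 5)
Your proposal is correct and follows essentially the same route as the paper: combine Theorem~\ref{thm:liebeck-halasi-maroti} with the order bound from Theorem~\ref{thm:maroti}, dispatching the Mathieu exceptions via $b(G)\leq 7$. The ``compatibility check'' you worry about is vacuous here, since Theorem~\ref{thm:maroti} is already stated in terms of Definition~\ref{def:large}, so ``not large'' immediately places $G$ in case (b) or (c).
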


\noindent (Throughout the paper, logarithms are to base $2$, unless specified otherwise.)

 In future work, we plan to consider the normaliser
problem for imprimitive groups, but the required techniques will be
quite different.   The proof of Theorem~\ref{thm:normsym-qpoly} relies on the fact that
primitive groups have generating sets of size $\mathrm{max}\{2, \log
n\}$ (see \cite{holt-RD}),
and either have a small base, as in Corollary~\ref{cor:base_size}, or
have a very precisely specified structure: in general, imprimitive groups have no
logarithmic bound on generating sets or base size. 

The paper is structured as follows.
In Section~\ref{sec:small-base-groups-and-giants}
we divide the primitive groups into small, large and almost simple
groups, state the results from which Corollary~\ref{cor:base_size}
will follow, and  collect various well known
facts about the complexity of permutation group algorithms. 
In Section~\ref{sec:a-backtrack-search-for-small-base-groups}
we present an algorithm to solve the normaliser problem in $\Sn$ when $G$ is
not large. 
In Section~\ref{sec:recognising_large_groups} we first show that 
the normaliser in $\Sn$ of the socle of any large primitive 
group of type PA can be computed in
polynomial time. 
We then give a ``moderately'' quasipolynomial time algorithm (time 
$2^{O(\log n \log \log n)}$)  to solve the normaliser
problem in $\Sn$ when $G$ is large and of type PA, and 
a polynomial time method when $G$ is almost simple.

\section{Small and large primitive groups}
\label{sec:small-base-groups-and-giants}

In this section we first divide the primitive groups into three
(non-disjoint) families: small groups, large groups and almost simple
groups. We then present some standard complexity results.

\subsection{Small and large primitive groups}

We write $\sym(\Omega)$ to denote the symmetric group on an arbitrary (finite)
set $\Omega$, and reserve $\Sn$ for the symmetric group on $\{1,
\ldots, n\}$. 

\begin{defn}\label{def:perm_isom}
Let $H \leq \sym(\Omega)$ and $K \leq \sym(\Gamma)$. A
\emph{permutation isomorphism} from $H$ to $K$ is a pair $(f, \phi)$,
where $f: \Omega \rightarrow \Gamma$ is a bijection, $\phi: H
\rightarrow K$ is an isomorphism, and $(\alpha^h)f = (\alpha f)^{h \phi}$ for all $\alpha \in \Omega$ and
$h \in H$.
\end{defn}

\begin{defn}\label{def:large}
Let $G \leq \Sn$ be primitive. Then $G$ is \emph{large} if there exist
natural numbers $m$, $k$ and $\ell$ such that  $n = \binom{m}{k} ^ \ell$, and 
    $G$ is permutation isomorphic to a group $\myhat G$ with
    $\mathrm{A}_m ^ \ell \leq \myhat G \leq \mathrm{S}_m \wr \mathrm{S}_\ell$,
    where the action of $\mathrm{A}_m$ and $\mathrm{S}_m$ is on
    $k$-element subsets of $\{1, \ldots, m\}$ and $\myhat G$ is in
    product action. (For a detailed description of product action, see
    \cite[Section 4.5]{dixon-mortimer:permutation-groups}.) 
\end{defn}


\begin{thm}[{\cite[Theorem 1.1]{maroti:orders-primitive-groups}}]\label{thm:maroti}
Let $G \leq \Sn$ be primitive. Then one of the following holds:
\begin{enumerate}
    \item
    $G$ is large;
    \item
    $G = \mathrm{M}_{11}$, $\mathrm{M}_{12}$, $\mathrm{M}_{23}$ or
    $\mathrm{M}_{24}$ with their $4$-transitive actions; or
    \item
    $|G| < n^{1 + \lfloor \log n \rfloor}$.
\end{enumerate}
\end{thm}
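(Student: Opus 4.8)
The plan is to run through the O'Nan--Scott subdivision of the primitive groups (as in \cite{liebeck-praeger-saxl:onan-scott}, or Cameron's survey \cite{cameron:permgroups-simple-groups}) and to verify conclusion~(c), $|G| < n^{1+\lfloor\log n\rfloor}$, in every case, forcing the groups that violate it into conclusions~(a) or~(b). The affine type is the extremal case and is entirely elementary: if $G \leq \mathrm{AGL}_d(p)$ with $n = p^d$ then $|G| \leq p^d|\mathrm{GL}_d(p)| = p^d\prod_{i=0}^{d-1}(p^d-p^i)$, and since $p \geq 2$ we have $p^i \geq 2^i$, hence $p^d - p^i \leq n - 2^i$, while $n = p^d \geq 2^d$ gives $d \leq \lfloor\log n\rfloor$; multiplying in the remaining positive factors $n - 2^i$ for $d \leq i < \lfloor\log n\rfloor$ yields the sharp bound $|G| \leq n\prod_{i=0}^{\lfloor\log n\rfloor - 1}(n - 2^i) < n^{1+\lfloor\log n\rfloor}$, with equality attained by $\mathrm{AGL}_d(2)$.

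For the compound O'Nan--Scott types I would exploit the fact that the exponent $1 + \lfloor\log n\rfloor$ grows quickly enough to absorb the wreathing and outer-automorphism factors. In the (simple and compound) diagonal types the socle is $T^k$ with $T$ a nonabelian simple group and $k \geq 2$, the degree is $n = |T|^{k-1}$, and $|G| \leq |T|^k\,|\mathrm{Out}(T)|\,k!$; since $|T| \geq 60$ one has $\lfloor\log n\rfloor \geq 5(k-1)$, and the crude bounds $|\mathrm{Out}(T)| < |T|$ and $k! < |T|^{k-1}$ leave~(c) with enormous room to spare, and likewise for the twisted wreath type, where $\mathrm{soc}(G) = T^k$ is regular, $k \geq 6$, $n = |T|^k$, and $|G| \leq |T|^k k!$. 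For the product action types, $G$ lies in a wreath product $W \wr \mathrm{S}_k$ acting in product action on $\Delta^k$, where $W \leq \mathrm{Sym}(\Delta)$ is primitive of almost simple or diagonal type and $n = |\Delta|^k$; here I would induct on $|\Delta|$: if $W$ satisfies~(c) then, using $|\Delta| \geq 5$ and $\lfloor\log n\rfloor \geq k\lfloor\log|\Delta|\rfloor$, the factor $k!$ is easily absorbed and~(c) propagates to $G$; if $W$ is one of the Mathieu exceptions of~(b) the bounded excess of $|W|$ over $|\Delta|^{1+\lfloor\log|\Delta|\rfloor}$ is absorbed once $k \geq 2$; and if $W$ is an alternating group acting on $s$-element subsets then $\mathrm{A}_m^k \leq G \leq \mathrm{S}_m \wr \mathrm{S}_k$ in product action, so $G$ falls under conclusion~(a) (with $\ell = k$).

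The substance of the argument, and the main obstacle, is the almost simple case, where $T \trianglelefteq G \leq \mathrm{Aut}(T)$ for $T$ nonabelian simple; here the classification of finite simple groups is essential. For $T = \mathrm{A}_m$ I would run through the primitive actions of $\mathrm{A}_m$ and $\mathrm{S}_m$: the action on $k$-element subsets places $G$ into conclusion~(a) (with $\ell = 1$), while for every other maximal subgroup the degree $n$ is large enough relative to $m$ that $|G| \leq m! < n^{1+\lfloor\log n\rfloor}$, the finitely many genuinely small $m$ being checked directly. For $T$ sporadic one checks the finitely many primitive actions of $T$ and of $\mathrm{Aut}(T)$: all satisfy~(c) except that the $4$-transitive actions of $\mathrm{M}_{11}, \mathrm{M}_{12}, \mathrm{M}_{23}, \mathrm{M}_{24}$ are recorded as conclusion~(b). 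For $T$ of Lie type I would combine lower bounds on the minimal faithful permutation degree of $T$ with $|\mathrm{Aut}(T)| \leq |T|\,|\mathrm{Out}(T)|$ and Liebeck-type estimates on the orders of primitive almost simple groups of Lie type, obtaining $|G| < n^c$ for a small absolute constant $c$ --- far below $n^{1+\lfloor\log n\rfloor}$ once $n$ is moderate --- and then settle the finitely many small-degree configurations (the subspace actions of the classical groups, $\mathrm{PSL}_2(q)$ on $q+1$ points, and so on) by hand. The delicate point throughout is that one needs the \emph{sharp} exponent $1 + \lfloor\log n\rfloor$, so that $\mathrm{AGL}_d(2)$ is genuinely extremal and the \emph{only} primitive groups escaping~(c) are the subset-action alternating groups of~(a) and the four Mathieu groups of~(b); this requires the explicit constants to be tracked carefully through the CFSG case analysis, the low-rank classical groups and the sporadic groups being where this is most delicate.
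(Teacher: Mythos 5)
The first thing to say is that the paper does not prove this statement at all: Theorem~\ref{thm:maroti} is imported verbatim (in slightly weakened form) from Mar\'oti \cite{maroti:orders-primitive-groups}, and is used as a black box. So there is no internal proof to compare against; what you have written is an attempt to reprove Mar\'oti's theorem itself. As an outline it does follow the strategy of the published proof --- an O'Nan--Scott case division, elementary estimates for the affine, diagonal, twisted wreath and product--action types, and CFSG-based order bounds (Liebeck's work) for the almost simple case --- and your affine computation is correct and is indeed the source of the extremal bound $n\prod_{i=0}^{\lfloor\log n\rfloor-1}(n-2^i)$ attained by $\mathrm{AGL}_d(2)$.

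However, as a proof the proposal has a genuine gap exactly where you yourself locate the substance: the almost simple case is planned, not proved. The claim that Liebeck-type estimates give $|G|<n^c$ for a small absolute constant $c$, leaving only ``finitely many small-degree configurations'' to check by hand, is not correct as stated: the exceptions to any such polynomial bound form \emph{infinite} families ($\mathrm{A}_m$ acting on $k$-subsets and on partitions, and classical groups in subspace actions), and for classical groups over small fields the target inequality is genuinely tight --- e.g.\ for $\mathrm{PGL}_d(2)$ on the $n=2^d-1$ points of projective space one has $|G|=\prod_{i=0}^{d-1}(2^d-2^i)$, which must be compared with $n^{1+\lfloor\log n\rfloor}=(2^d-1)^d$, a comparison that needs its own argument rather than a finite check. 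Handling precisely these families with the sharp exponent is what Mar\'oti's paper is devoted to, so deferring them means the proof is incomplete. There are also minor slips: $k!<|T|^{k-1}$ is false for large $k$ (harmless here, since in the diagonal case $k$ is logarithmic in $n$, but the inequality as written is wrong), and in the product-action case you should note explicitly that O'Nan--Scott gives $\mathrm{soc}\,G=(\mathrm{soc}\,W)^\ell$, so that when $W$ has socle $\mathrm{A}_m$ acting on $k$-subsets the group $G$ really does contain $\mathrm{A}_m^\ell$ and hence is large in the sense of Definition~\ref{def:large}.
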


We shall say that $G$ is \emph{small} if $|G| < n^{1 + \lfloor \log n
  \rfloor}$. Thus each primitive group $G$ is small, large,
or almost
simple.

We shall combine Theorem~\ref{thm:maroti} with the following recent result, which
builds on Duyan, Halasi and Mar\'oti's proof
\cite{duyan-halasi-maroti} of Pyber's base size conjecture.

\begin{thm}[\cite{liebeck-halasi-maroti}]\label{thm:liebeck-halasi-maroti}
Let $G \leq \Sn$ be primitive. Then
$b(G) \leq \frac{2 \log|G|}{\log n} + 24.$
\end{thm}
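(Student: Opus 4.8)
The plan is to establish the inequality by running through the O'Nan--Scott types of the primitive group $G\le\Sn$; as in the proof of Pyber's base size conjecture by Duyan, Halasi and Mar\'oti~\cite{duyan-halasi-maroti}, essentially all of the difficulty is concentrated in the affine case, and the contribution of~\cite{liebeck-halasi-maroti} is to push the constants down to the explicit values $2$ and $24$. Write $n$ for the degree. For the diagonal and twisted wreath types, $n$ is bounded below by a fixed positive power of $|\soc G|$ while the base size is bounded by an absolute constant, so the right-hand side $\frac{2\log|G|}{\log n}+24$ comfortably dominates $b(G)$; here one merely verifies the inequality from the known structure, checking a short finite list of small cases directly. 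For the product action type, where $G\le K\wr\mathrm{S}_r$ acts on $\Delta^r$ with $K$ primitive of almost simple or diagonal type on $\Delta$ and $n=|\Delta|^r$, one assembles a base of $G$ from a base of $K$ on $\Delta$ used simultaneously in all $r$ coordinates, together with a small further contribution to pin down the top group $\mathrm{S}_r$; since $\log|G|\le r\log|K|+\log r!$, the bound for $G$ then follows from the corresponding bound for $K$. For the almost simple type, one uses that --- with the exception of the ``standard'' actions, namely those of $\mathrm{S}_m$ and $\mathrm{A}_m$ on subsets or on uniform partitions and those of classical groups on subspaces of their natural module --- the base size is bounded by an absolute constant, making the inequality immediate; for the standard families one estimates $b(G)$ and $\log|G|/\log n$ directly (this in particular disposes of the large groups of Definition~\ref{def:large} with $\ell=1$, those with $\ell\ge 2$ being of product action type).

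This leaves the affine case, which is the heart of the proof. Here $G=V\rtimes H$ with $V=\mathbb{F}_p^d$, so $n=p^d$, and $H\le\mathrm{GL}(V)$ acts irreducibly; the zero vector is fixed by all of $H$ and moved by every nontrivial translation, so $b(G)=b_H(V)+1$, where $b_H(V)$ is the base size of $H$ on the set of vectors. Since $|G|=p^d\,|H|$ and $\log n=d\log p$, the theorem in this case is equivalent to the inequality $b_H(V)\le\frac{2\log|H|}{d\log p}+25$. The plan for this is a greedy construction of a base for $H$: at each step one picks a vector that is fixed by as few as possible of the remaining nontrivial elements, controlling the contraction of the point stabiliser through the fixed-point ratios $|C_V(h)|/|V|$. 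The crude bound $|C_V(h)|/|V|\le 1/p$ for $h\ne 1$ only yields $b_H(V)=O(\log|H|/\log p)$, which loses a factor of $d$; the extra saving that restores the correct denominator $\log n=d\log p$ --- and brings the leading constant down to $2$ --- comes from a structural case division on the irreducible group $H$. Either a generic tuple of boundedly many vectors already has trivial stabiliser; or $H$ preserves additional linear structure (a system of imprimitivity on $V$, a tensor decomposition, or an extension-field structure), in which case one reduces by induction to an irreducible group of smaller dimension; or $H$ is, modulo scalars, of ``almost simple'' Aschbacher type, where much stronger fixed-point-ratio bounds, ultimately resting on the classification of finite simple groups, are available. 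Carrying all constants through this trichotomy, and handling the genuinely small irreducible groups and a short list of exceptional configurations by hand, gives $b_H(V)\le\frac{2\log|H|}{d\log p}+25$, and hence the theorem.

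The main obstacle is precisely this affine estimate, and within it the bookkeeping of the constants: the argument of~\cite{duyan-halasi-maroti} yields only a much larger leading constant, and bringing it down to $2$ forces one to use sharp forms of the orbit- and fixed-point-counting lemmas, to be careful in the inductive tensor- and extension-field reductions, and to treat a fair number of small-dimensional linear groups and exceptional almost simple actions explicitly. That $\log|G|/\log n$ is the correct order of magnitude for $b(G)$ in the worst case --- so that the $\log n$ in the denominator cannot be weakened --- is witnessed by extremal examples such as $\mathrm{S}_m$ acting on $2$-element subsets and the classical groups acting on their natural modules, which is why the optimisation of the constant is intrinsically delicate rather than merely technical.
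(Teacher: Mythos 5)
You should first note that the paper does not prove this statement at all: it is quoted verbatim from Halasi, Liebeck and Mar\'oti \cite{liebeck-halasi-maroti} and used as a black box (its only role here is to feed into Corollary~\ref{cor:base_size}). So the relevant comparison is with the proof in that cited paper, and at the level of architecture your outline is faithful to it: an O'Nan--Scott case division, constant or easily estimated base sizes outside the affine case (building on \cite{duyan-halasi-maroti} and on the classification of almost simple actions into standard and non-standard ones), and the real work in the affine case $G=V\rtimes H$, where the problem becomes bounding the base size of an irreducible $H\le\mathrm{GL}(V)$ on vectors via fixed-point ratios, Aschbacher-type structural reductions, and CFSG-based estimates. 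Your arithmetic reduction of the affine case to $b_H(V)\le 2\log|H|/(d\log p)+25$ is also correct.

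However, as a proof the proposal has a genuine gap: it is a roadmap in which every quantitatively hard step is asserted rather than carried out. The phrases ``one merely verifies'', ``one estimates directly'' and ``carrying all constants through this trichotomy'' stand in for the entire content of \cite{liebeck-halasi-maroti} --- sharp fixed-point-ratio bounds for linear groups, the inductive tensor/extension-field reductions with explicit constants, the treatment of standard actions of alternating and classical groups, and a substantial list of small cases --- none of which is reproduced or even precisely stated, and all of which ultimately rests on CFSG. There is also a concrete inaccuracy in the non-affine discussion: for diagonal type the base size is \emph{not} bounded by an absolute constant in general (take $\soc G=T^r$ with $T$ fixed and the top group $\mathrm{S}_r$ with $r\to\infty$; then $b(G)$ grows roughly like $\log r/\log|T|$), so that case too requires comparing two growing quantities rather than a constant against $2\log|G|/\log n+24$; the inequality still holds, but not for the reason you give. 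In the context of this paper the correct ``proof'' is simply the citation; if you intend to reprove the theorem, essentially all of the work remains to be done.
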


\noindent {\sc Proof of Corollary~\ref{cor:base_size}:} Since $G$ is
primitive and not large, $G$ falls in Case (b) or (c) of
Theorem~\ref{thm:maroti}. If $G$ is in Case (b) then $b(G) \leq 7$,
and the result is immediate. If $G$ is in Case (c) then we use
Theorem~\ref{thm:liebeck-halasi-maroti} to see that
$$b(G) \leq \frac{2 \log |G|}{\log n} + 24 \leq \frac{2(1 + \lfloor \log n
\rfloor)\log n}{\log n}  + 24 \leq 2  \lfloor \log n \rfloor + 26$$ as required. 
\hfill $\Box$

\subsection{Complexity preliminaries}\label{subsec:complexity}

The following results are classical: see, for example 
\cite{furst-hopcroft-luks:basesgs-ptime, sims-1970}. Recall from
Section~\ref{sec:intro} that if we say that a permutation group
algorithm runs in \emph{polynomial time}, we mean polynomial in the
degree $n$. 

\begin{lemma}\label{lem:complexity-compendium}
Let $G = \langle X \rangle \leq \Sn$. 
\begin{enumerate}
\item In time $O(|X|n^2 +  n^5)$ we can replace $X$ by a generating set for
  $G$ of  size at most $n$. 
\item Given a generating set for $G$ of size at most $n$,
in polynomial time we can: compute the stabiliser in $G$ of any given
point; compute a base and strong generating
set for $G$; show that $G$ is primitive; compute
 $|G|$; find (at most $n$) generators for the socle $\soc G$; and compute
  the composition factors of $G$.
\item Given a base $B = (\beta_1, \ldots, \beta_b)$ and a strong generating
  set for $G$, 
in time $O(n^2)$ we can
test whether for a tuple $D \in \{1, \ldots, n\}^ b$
there exists $y \in G$ such that $B^y = D$, 
and if there exists such a $y \in G$ then determine $y$.
\end{enumerate}
\end{lemma}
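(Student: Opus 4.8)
All three statements are standard results from computational group theory, so the task is to assemble the appropriate classical algorithms and check that they meet the stated bounds; I would treat the three parts in turn. For (a), the plan is to construct Jerrum's \emph{labelled branching} for $\gen{X}$ (\cite{jerrum:labelled-branchings}) by inserting the elements of $X$ one at a time: the branching is a data structure holding at most $n-1$ permutations generating $\gen{X}$, inserting a single input generator and sifting it costs $O(n^2)$, and the cascade of internally produced sifted elements triggered over the whole construction is controlled by a potential argument, so that the total running time stays within the claimed $O(\abs X n^2 + n^5)$, after which one simply reads off the edge labels as the reduced generating set. Equivalently, one runs the Schreier--Sims algorithm from $X$ (\cite{furst-hopcroft-luks:basesgs-ptime, sims-1970}) and extracts a generating set of size at most $n$ from the resulting strong generating set; either way the exponents are precisely those obtained by the standard careful implementation.

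For (b), the main tool is again Schreier--Sims (\cite{furst-hopcroft-luks:basesgs-ptime, sims-1970, sims-1971}; see also \cite{seress:permutation-group-algorithms}), which from a generating set of size at most $n$ produces in polynomial time a base $B = (\beta_1, \ldots, \beta_b)$ and strong generating set for the stabiliser chain $G = G^{(1)} \geq G^{(2)} \geq \cdots \geq G^{(b+1)} = 1$. From this, $\abs G = \prod_i \abs{\beta_i^{G^{(i)}}}$ is immediate, and the stabiliser of an arbitrary point $\alpha$ is obtained by re-running Schreier--Sims with $\alpha$ prepended to the base (so that $G_\alpha$ inherits the BSGS of $G^{(2)}$), or equals $G$ if $\alpha$ is fixed. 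To decide primitivity one first tests transitivity via the orbit of a point, and then, for each $\delta \neq 1$, computes the smallest block containing $\{1, \delta\}$ by the usual orbit-closure (union-find) procedure on the generators; $G$ is primitive exactly when every such block is all of $\{1, \ldots, n\}$. The composition factors are found in polynomial time by Beals's classification-free algorithm (\cite{beals:compseries-without-cfsg}), or by \cite{babai-kantor-luks:consequences-cfsg}; generators for $\soc G$ are then produced by the polynomial-time machinery for the normal structure of permutation groups (\cite{babai-kantor-luks:consequences-cfsg, kantor-luks:computing-in-quotient-groups}, together with \cite{luks-seress:fitting-subgroup-centraliser-normal-subgroup} for the soluble part), and finally (a) lets us cut that generating set down to size at most $n$.

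For (c), the plan is the level-by-level transversal-chasing familiar from sifting. We build $y$ as a product $u_1 u_2 \cdots u_b$ with $u_i \in G^{(i)}$, maintaining the partial product $p_i := u_1 \cdots u_{i-1}$. At level $i$ we must have $y = p_i z$ with $z \in G^{(i)}$ and $\beta_i^y = d_i$, equivalently $\beta_i^z = d_i^{p_i^{-1}}$; we test whether $d_i^{p_i^{-1}}$ lies in the stored fundamental orbit $\beta_i^{G^{(i)}}$, reporting failure if it does not, and otherwise read off the transversal element $u_i$ with $\beta_i^{u_i} = d_i^{p_i^{-1}}$ and set $p_{i+1} := p_i u_i$. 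After at most $b \leq n$ levels we have either failed or produced $y := p_{b+1}$ with $B^y = D$. Each level costs $O(n)$ --- one image computation $d_i^{p_i^{-1}}$ and one composition of permutations --- so the procedure runs in $O(bn) = O(n^2)$.

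There is no genuine mathematical obstacle here: the content of the lemma is exactly that each of these tasks is already known to be polynomial-time, the deepest external inputs being the polynomial-time computation of composition factors and of the socle, which rest on Beals's algorithm and the Babai--Kantor--Luks / Kantor--Luks / Luks--Seress normal-structure results. The only delicate point is the bookkeeping required to pin down the explicit exponents in (a) and (c), which is why one fixes in advance the data structures --- Schreier vectors and transversals for the fundamental orbits --- underpinning all of the above.
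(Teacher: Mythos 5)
Your proposal is correct and is essentially the paper's own justification: the lemma is stated there without proof, on the strength of exactly the classical sources you assemble (Sims/Furst--Hopcroft--Luks for the BSGS machinery, Jerrum's labelled branchings for the $O(\abs X n^2+n^5)$ reduction to at most $n$ generators, Beals and Babai--Kantor--Luks et al.\ for composition factors and the socle, and transversal chasing for base images). Two minor remarks: it is the Jerrum route, not the ``equivalently run Schreier--Sims'' aside, that actually delivers the stated exponents in (a); and in (c) your identity $\beta_i^z = d_i^{p_i^{-1}}$ presumes the rightmost-factor-acts-first convention, so with the paper's usual $\alpha^{gh}=(\alpha^g)^h$ convention the transversal elements should accumulate on the other side --- the algorithm and the $O(bn)\subseteq O(n^2)$ count are otherwise exactly the standard ones.
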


%
    Let $H = \gen{a_1, \ldots, a_k} \leq \mathrm S_m$ and $K = \gen{b_1, \ldots, b_\ell} \leq \mathrm S_n$.
    We encode a homomorphism
    $\varphi \from H \to K$
    by a list
    $
        [ a_1, \ldots, a_k,
        b_1, \ldots, b_\ell,
        (a_1)\varphi, \ldots, (a_k)\varphi ].
    $
    We say that $\varphi$ is 
    \emph{given by generator images}. 
%


The following lemma is standard, and follows from 
Lemma~\ref{lem:complexity-compendium}. 

\begin{lemma}\label{lem:comp-with-homs-poly}
    Let $G_i = \gen{X_i} \leq \Sym_{n_i}$,  for
    $i = 1, 2, 3$.
    Furthermore let $\varphi \from G_1 \to G_2$
    and $\psi \from G_2 \to G_3$
    be homomorphisms given by generator images.
    Then in time polynomial in $|X_1|n_1 + |X_2|n_2 + |X_3|n_3$ we can:
     evaluate $\varphi$;
       compute $\varphi \psi$;
         compute  $\varphi^{-1}$, if $\varphi$ is an isomorphism;
        compute the restriction of $\varphi$ to $H_1$,  for any
        subgroup $H_1$ of $G_1$ with given generators. 
\end{lemma}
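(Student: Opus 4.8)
The plan is to reduce all four tasks to a single subroutine --- evaluating a homomorphism given by generator images at an arbitrary element of its domain --- and then to handle the rest by straightforward manipulation of the encoding.

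Write $X_1 = \{a_1, \dots, a_k\}$ and consider the graph of $\varphi$,
\[
    \Gamma_\varphi := \gen{\, (a_1, (a_1)\varphi), \dots, (a_k, (a_k)\varphi)\,} \leq \Sym_{n_1} \times \Sym_{n_2} \leq \Sym_{n_1 + n_2},
\]
where $\Sym_{n_1} \times \Sym_{n_2}$ acts on $\{1, \dots, n_1\}$ together with a disjoint copy of $\{1, \dots, n_2\}$. Since $\varphi$ is a homomorphism, every word in these generators evaluates to a pair $(w, (w)\varphi)$ with $w$ the corresponding word in $X_1$; hence $\Gamma_\varphi = \{(h, (h)\varphi) : h \in G_1\}$ and projection onto the first component is an isomorphism $\Gamma_\varphi \to G_1$. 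In particular any base $B$ of $G_1$, viewed as a sequence of points in $\{1, \dots, n_1\}$, is also a base of $\Gamma_\varphi$. After reducing the generating set by Lemma~\ref{lem:complexity-compendium}(a) if necessary, I would use Lemma~\ref{lem:complexity-compendium} to compute a base $B \subseteq \{1, \dots, n_1\}$ and strong generating set for $\Gamma_\varphi$ (this is possible because the first block alone carries a faithful action). To evaluate $\varphi$ at a given $g \in G_1$, compute $D := B^g$ --- which uses only the action of $g$ on $\{1, \dots, n_1\}$ --- and apply Lemma~\ref{lem:complexity-compendium}(c) to recover the unique $\gamma \in \Gamma_\varphi$ with $B^\gamma = D$; then $\gamma = (g, (g)\varphi)$ and $(g)\varphi$ is read off from the coordinates of $\gamma$ on the copy of $\{1, \dots, n_2\}$. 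This is polynomial in $|X_1| n_1 + |X_2| n_2$. (Alternatively one can compute a strong generating set for $G_1$ together with straight-line programs expressing the strong generators over $X_1$, sift $g$ to obtain a straight-line program for $g$, and evaluate it via the images $(a_i)\varphi$; the graph construction merely packages this bookkeeping.)

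The remaining assertions follow quickly. The composite $\varphi\psi \from G_1 \to G_3$ is given by generator images via $X_1$ with images $((a_i)\varphi)\psi$, and each of these is obtained by applying the evaluation subroutine to $\psi$ (one base-and-strong-generating-set computation for $\Gamma_\psi$, then $k$ applications of Lemma~\ref{lem:complexity-compendium}(c)); the cost is polynomial in $|X_1| n_1 + |X_2| n_2 + |X_3| n_3$. If $\varphi$ is an isomorphism then $\{(a_1)\varphi, \dots, (a_k)\varphi\}$ generates $G_2$, and $\varphi^{-1} \from G_2 \to G_1$ is immediately encoded by generator images via this generating set with images $a_1, \dots, a_k$, so no computation is needed (should one wish to re-express $\varphi^{-1}$ over a prescribed generating set of $G_2$, evaluate it there using the subroutine above). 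Finally, for a subgroup $H_1 = \gen Y$ of $G_1$ we may assume $|Y| \leq n_1$ by Lemma~\ref{lem:complexity-compendium}(a), and then $\varphi|_{H_1} \from H_1 \to G_2$ is encoded by generator images via $Y$ with images $(y)\varphi$ for $y \in Y$, each computed by the evaluation subroutine; the total cost is again polynomial in the stated quantities.

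The only substantive step is the evaluation subroutine, and within it the only point needing care is that the base of $\Gamma_\varphi$ must lie among the first $n_1$ points, so that $D = B^g$ can be formed knowing $g$ alone; this is automatic because a base of $G_1$ lifts to a base of $\Gamma_\varphi$. Everything else is routine reshuffling of the ``generator images'' encoding, and the polynomial running times are immediate from Lemma~\ref{lem:complexity-compendium}.
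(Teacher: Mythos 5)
Your proof is correct, and it is essentially the argument the paper has in mind: the paper gives no explicit proof, stating only that the lemma is standard and follows from Lemma~\ref{lem:complexity-compendium}, and your graph-of-the-homomorphism construction (a base--strong-generating-set for $\gen{(a_i,(a_i)\varphi)} \leq \Sym_{n_1+n_2}$ with base points confined to the first block, then Lemma~\ref{lem:complexity-compendium}(c) to evaluate) is precisely the standard way to make that reduction explicit. The one point needing care --- that the base can be chosen inside $\{1,\dots,n_1\}$ because the first-block action is faithful --- is correctly identified and justified, and the reductions of composition, inversion and restriction to the evaluation subroutine are all fine.
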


We shall also need to be able to compute a permutation isomorphism when given a
suitable group isomorphism. 
The following lemma is well known: see for example
\cite[Lemma 1.6B]{dixon-mortimer:permutation-groups} for a statement
equivalent to Part (a), and \cite[Lemma
3.5]{luks-miyazaki:normalisers-ptime} for a somewhat more general
statement than Part (b). 

\begin{lemma}\label{lem:permiso-from-groupiso-poly}
    Let $H = \gen A \leq \sym(\Omega)$ and $K = \gen B \leq
    \sym(\Gamma)$ both be transitive, 
    and let $\varphi \from H \to K$ be an isomorphism.
    \begin{enumerate}
    \item There exists a bijection $f$ such that $(f, \varphi)$ is a permutation
    isomorphism from $H$ to $K$ if and only if
    $\varphi$ maps each point stabiliser of $H$ to a point stabiliser of $K$.

    \item If such an $f$ exists, and $\varphi$ is given by  generator
      images, 
    then $f$ can be computed in time polynomial in $(\abs A + \abs B) |\Omega|$.
    \end{enumerate}
\end{lemma}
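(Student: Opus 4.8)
The plan is to build on the classical correspondence between transitive permutation representations and point stabilisers, so that both parts reduce to a coset construction.

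For part~(a), the forward implication is immediate: if $(f,\varphi)$ is a permutation isomorphism then for each $\alpha\in\Omega$ the identity $(\alpha^h)f=(\alpha f)^{h\varphi}$ shows that $h$ fixes $\alpha$ precisely when $h\varphi$ fixes $\alpha f$, so $\varphi$ carries the point stabiliser $H_\alpha$ onto $K_{\alpha f}$; as $f$ is a bijection this exhausts the point stabilisers of $H$ and of $K$. For the converse I would fix $\alpha_0\in\Omega$, use the hypothesis to pick $\gamma_0\in\Gamma$ with $H_{\alpha_0}\varphi=K_{\gamma_0}$, and define $f$ by $(\alpha_0^h)f:=\gamma_0^{h\varphi}$ for $h\in H$ (legitimate since $H$ is transitive). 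The three things to check — that this is independent of the choice of $h$ in its coset $H_{\alpha_0}h$, that $f$ is a bijection (here one uses transitivity of $K$ and the equalities $|\Omega|=[H:H_{\alpha_0}]=[K:K_{\gamma_0}]=|\Gamma|$), and that multiplicativity of $\varphi$ gives the equivariance $(\alpha^g)f=(\alpha f)^{g\varphi}$ — are all one-line computations.

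For part~(b) I would make this construction effective. First, using Lemma~\ref{lem:complexity-compendium}, compute a base and strong generating set for $H$ together with generators for $H_{\alpha_0}$, and by Lemma~\ref{lem:comp-with-homs-poly} evaluate $\varphi$ on these generators to obtain generators for $L:=H_{\alpha_0}\varphi\le K$. Second, locate $\gamma_0$: since such an $f$ exists we have $|\Omega|=|\Gamma|$, so $|L|=|H_{\alpha_0}|=|H|/|\Omega|=|K|/|\Gamma|=|K_\gamma|$ for every $\gamma\in\Gamma$ by transitivity of $K$; hence any point fixed by all generators of $L$ is a valid $\gamma_0$ with $K_{\gamma_0}=L$, and such a point exists by part~(a) and is found by intersecting the fixed-point sets of the generators of $L$ in time $O(|\Gamma|)$ per generator. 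Third, grow a breadth-first Schreier tree for $H$ acting on $\Omega$ rooted at $\alpha_0$, and along each tree edge $\alpha\mapsto\alpha^a=\alpha'$ with $a\in A$ set $\alpha' f:=(\alpha f)^{(a)\varphi}$, starting from $\alpha_0 f:=\gamma_0$; by the well-definedness established in part~(a) this computes the bijection $f$ determined by $\alpha_0$ and $\gamma_0$. Every step — the Schreier--Sims preprocessing, evaluating $\varphi$ on the generators, and the tree traversal with $O(|\Omega|)$ applications of permutations of degree $|\Gamma|=|\Omega|$ — runs in time polynomial in $(|A|+|B|)\,|\Omega|$.

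I do not anticipate a real obstacle: the statement is classical and every ingredient is either an elementary group computation or a complexity fact already recorded in Lemmas~\ref{lem:complexity-compendium} and~\ref{lem:comp-with-homs-poly}. The one point needing a moment's care is that the $\gamma_0$ returned in step two genuinely satisfies $K_{\gamma_0}=L$ rather than merely $L\le K_{\gamma_0}$, which is exactly where transitivity of $K$ and the order count $|L|=|K_{\gamma_0}|$ are used.
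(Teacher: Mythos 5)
Your proof is correct. The paper itself gives no proof of this lemma, citing instead \cite{dixon-mortimer:permutation-groups} for part (a) and \cite{luks-miyazaki:normalisers-ptime} for part (b); your argument --- the classical correspondence between a transitive action and the coset action on a point stabiliser, with the converse bijection $\alpha_0^h \mapsto \gamma_0^{h\varphi}$ made effective by locating $\gamma_0$ as a common fixed point of generators of $H_{\alpha_0}\varphi$ (using the order count $|H_{\alpha_0}\varphi| = |K_{\gamma_0}|$ to rule out a proper containment) and propagating $f$ along a Schreier tree --- is essentially the standard proof found in those references, and the complexity accounting via Lemmas~\ref{lem:complexity-compendium} and~\ref{lem:comp-with-homs-poly} is sound.
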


\section{The normaliser problem for small groups}
\label{sec:a-backtrack-search-for-small-base-groups}

In this section, we
 present an algorithm to compute the normaliser in $\Sn$ of a transitive
group of degree $n$,  in time exponential in $\log n$ and the sizes of a
given base and (not necessarily strong) generating set.
For primitive groups which are not large we show that this yields a
quasipolynomial time algorithm.

First we show that we can decide in polynomial time 
whether a primitive group $G$ is small.

\begin{lemma}\label{lem:rec-large-groups}
    Let $G = \gen X \leq \Sn$ be primitive, with $|X| \leq n$. 
    We can decide in polynomial time whether $G$ is small,
    and whether $G$ is almost simple. 
\end{lemma}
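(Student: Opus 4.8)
The plan is to combine the complexity primitives of Lemma~\ref{lem:complexity-compendium} with the defining inequality for smallness and the structure theory of almost simple primitive groups. First I would invoke Lemma~\ref{lem:complexity-compendium}(b) to compute $|G|$ in polynomial time; comparing $|G|$ with $n^{1 + \lfloor \log n \rfloor}$ then decides in polynomial time whether $G$ is small, since this integer comparison costs only $\mathrm{poly}(\log n)$ bit operations once $|G|$ is known. So the first assertion is immediate.

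For the second assertion, recall that $G$ is almost simple precisely when its socle $N := \soc G$ is a nonabelian simple group and $N \leq G \leq \aut N$, equivalently when $N$ is simple and $C_G(N) = 1$. Using Lemma~\ref{lem:complexity-compendium}(b) I would first compute (at most $n$) generators for $N = \soc G$, and compute the composition factors of both $G$ and $N$ in polynomial time. If $N$ is not simple (i.e.\ it has more than one composition factor, or its unique composition factor is cyclic) then $G$ is not almost simple. Otherwise $N$ is simple and nonabelian; it remains to check whether $G/N$ embeds in $\out N$, or, what is the same thing, whether $C_G(N) = 1$. The centraliser $C_G(N)$ can be computed in polynomial time because $N$ is transitive (indeed primitive): the centraliser of a transitive group inside $\sym(\Omega)$ is semiregular and can be found in polynomial time by the classical algorithm for centralisers of transitive groups (see \cite{seress:permutation-group-algorithms}), and intersecting with $G$ — or more directly, for each generator computing the unique candidate commuting permutation and testing membership in $G$ via a strong generating set as in Lemma~\ref{lem:complexity-compendium}(c) — is polynomial. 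Then $G$ is almost simple if and only if $N$ is nonabelian simple and $C_G(N) = 1$.

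The step I expect to require the most care is verifying that $\soc G$ can indeed be handled as claimed: $\soc G$ is listed in Lemma~\ref{lem:complexity-compendium}(b) as computable in polynomial time, but to run the centraliser computation cleanly one wants $\soc G$ transitive, which holds because $G$ is primitive and hence $\soc G$ is transitive (a standard consequence of primitivity, e.g.\ \cite[Theorem 4.3B]{dixon-mortimer:permutation-groups}). An alternative, avoiding an explicit centraliser computation, is to note that once $N = \soc G$ is known to be nonabelian simple, one has $G$ almost simple if and only if $|G|$ divides $|\aut N|$ and $G$ acts faithfully on $N$ by conjugation; the first can be checked from $|G|$ and $|N|$ together with the order of $\out N$, but extracting $|\aut N|$ uniformly requires either CFSG-based tables or recognising the isomorphism type of $N$, so the centraliser route is the more self-contained one. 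Either way, every sub-step is polynomial in $n$, which proves the lemma.
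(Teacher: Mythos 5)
Your proposal is correct and follows essentially the same route as the paper: use Lemma~\ref{lem:complexity-compendium}(b) to compute $|G|$ and compare it with $n^{1+\lfloor\log n\rfloor}$ to decide smallness, then compute $\soc G$ and its composition factors to decide almost simplicity. The one divergence is your extra verification that $C_G(\soc G)=1$, which is redundant: if $N=\soc G$ is non-abelian simple, then $C_G(N)$ is normal in $G$, so if nontrivial it would contain a minimal normal subgroup of $G$, which lies in $\soc G=N$ and hence in $N\cap C_G(N)=Z(N)=1$, a contradiction; thus ``$\soc G$ non-abelian simple'' already implies $G$ is almost simple, and this is exactly the test the paper applies. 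Your polynomial-time justification for the centraliser step (the centraliser in $\Sn$ of the transitive group $\soc G$ is semiregular, so it has at most $n$ elements, each of which can be tested for membership in $G$) is sound, so the extra work does no harm, but it can simply be omitted; note only that your ``more direct'' variant of finding a unique candidate commuting permutation per generator is not quite right as stated, since individual generators do not determine centralising permutations.
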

\begin{proof}
By Lemma~\ref{lem:complexity-compendium}(b), we
can verify that $G$ is primitive and
 compute the order of $G$ in polynomial time, and
hence determine whether $G$ is small. 
By the same lemma, we can also compute the composition factors of $\soc G$ in polynomial
time, and hence determine whether $\soc G$ is non-abelian
simple. 
\end{proof}

Next we show that
for primitive groups that are not large
we can find generating sets and bases of size $O(\log n)$
in quasipolynomial time.

\begin{lemma}\label{lem:find_gen_set}
Let $G = \gen X \leq \Sn $ be primitive and  not large, with $\abs X \leq
n$. 
\begin{enumerate}
\item 
We can compute a generating set of size $t \leq \max\{\log n, 2\}$ for
$G$ in time
$2 ^ {O( \log ^ 3 n )}$.
\item  We can compute a base $B$ of size $b \leq 2 \lfloor \log n \rfloor
+ 26$ for $G$
in time
$2 ^ {O( \log ^ 2 n )}$.
\end{enumerate}
\end{lemma}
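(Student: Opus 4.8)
The plan is to reduce both parts to random sampling from $G$, using the fact that a primitive group which is not large is either one of the four Mathieu groups or satisfies $|G| < n^{1 + \lfloor \log n \rfloor}$ (Theorem~\ref{thm:maroti}), together with the known bound that primitive groups have generating sets of size at most $\max\{2, \log n\}$ (see \cite{holt-RD}) and the base-size bound of Corollary~\ref{cor:base_size}.

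For part (a): by Lemma~\ref{lem:complexity-compendium} we can compute a base and strong generating set for $G$ in polynomial time, which lets us both compute $|G|$ and produce uniformly random elements of $G$ at polynomial cost per element (using the strong generating set; see for example \cite{seress:permutation-group-algorithms}). Fix $t = \max\{\lceil\log n\rceil, 2\}$. We repeatedly draw $t$ random elements $g_1, \ldots, g_t$ of $G$, form $K = \gen{g_1, \ldots, g_t}$, and test whether $K = G$ by computing $|K|$ via Lemma~\ref{lem:complexity-compendium}(b) and comparing with $|G|$. If $K = G$ we return $\{g_1, \ldots, g_t\}$. The key point is the success probability of one trial: since $G$ has \emph{some} generating set of size $t$, a counting argument bounds the probability that $t$ uniform random elements fail to generate $G$. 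The crudest such bound is $\sum_{M} |M|^t / |G|^t$, where $M$ runs over the maximal subgroups of $G$; using $[G:M] \geq \text{(something growing)}$ this is at most $\sum_M [G:M]^{-t}$, and one needs this to be bounded away from $1$. This is where the hypothesis ``not large'' enters: for such $G$ we have good control on $|G|$ and on indices of maximal subgroups, and the estimate of \cite{holt-RD} (which is precisely a bound on the number of random elements needed to generate a primitive group) gives that $O(\log n)$, indeed $t$, elements suffice with probability bounded below by an absolute constant, or at least by $1/\mathrm{poly}(n)$. Hence an expected $\mathrm{poly}(n)$ (or $2^{O(\log^2 n)}$ in the pessimistic case) trials suffice; bounding the running time by $2^{O(\log^3 n)}$ is then comfortable, the extra logarithmic factors in the exponent absorbing the cost of amplification to get a Las Vegas guarantee.

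For part (b): having (by part (a), or directly) a generating set and a base/strong generating set, we find a base of size $b \leq 2\lfloor \log n\rfloor + 26$ greedily. Corollary~\ref{cor:base_size} guarantees \emph{a} base of this size exists; but a greedy base need not be shortest, so instead we argue probabilistically, mirroring the standard fact that random points form a short base. Pick points $\beta_1, \beta_2, \ldots$ one at a time: after choosing $\beta_1, \ldots, \beta_i$, if the pointwise stabiliser $G_{(\beta_1, \ldots, \beta_i)}$ is nontrivial, we need a point $\beta_{i+1}$ whose stabiliser in $G_{(\beta_1,\ldots,\beta_i)}$ is a proper subgroup, and ideally of index $\geq 2$; a uniformly random point moved by some fixed nontrivial element works with probability $\geq 1/2$ (or we can test all $n$ candidate points and keep the best, which only costs a polynomial factor). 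Each stabiliser computation is polynomial by Lemma~\ref{lem:complexity-compendium}. The subtle point is bounding the \emph{number} of steps by $2\lfloor\log n\rfloor + 26$: a purely greedy ``halve the order each time'' argument only gives a base of size $\log_2 |G| \leq (1 + \lfloor\log n\rfloor)\log n$, which is far too big. To get the sharper bound we must instead appeal to Corollary~\ref{cor:base_size} structurally — e.g. sample random $b$-tuples for $b = 2\lfloor\log n\rfloor + 26$ and test (in $O(n^2)$ time, or via recomputing the stabiliser) whether they form a base; the probability that a random $b$-tuple is a base is bounded below using the existence of one short base plus a standard fixed-point-ratio / Markov argument, so $\mathrm{poly}(n)$ (or $2^{O(\log^2 n)}$) trials suffice. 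Either way the total cost is $2^{O(\log^2 n)}$.

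The main obstacle, in both parts, is the probabilistic analysis of the success probability of a single trial — i.e.\ quoting or reproving the right form of the generation/base estimates for primitive non-large groups so that $O(\log n)$ random elements generate, and a tuple of length $2\lfloor\log n\rfloor + 26$ is a base, each with probability at least (a constant, or) $1/\mathrm{poly}(n)$. Given Theorem~\ref{thm:maroti}, Corollary~\ref{cor:base_size} and the cited bound of Holt--Roney-Dougal, this is essentially bookkeeping: one feeds the order bound $|G| < n^{1 + \lfloor\log n\rfloor}$ (and the trivial handling of the four Mathieu groups) into the standard estimates, and then amplifies to obtain the stated Las Vegas running times, with the $\log^3 n$ and $\log^2 n$ exponents leaving ample room for the repetitions.
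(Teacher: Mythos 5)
Your approach leaves exactly the steps you yourself call ``the main obstacle'' unproven, and for part (b) the argument you sketch is not valid as stated. The existence of one base of size $b = 2\lfloor\log n\rfloor+26$ (Corollary~\ref{cor:base_size}) gives no lower bound on the probability that a \emph{random} $b$-tuple is a base: the natural union bound for that probability is $\sum_{1\neq g\in G}\left(\mathrm{fix}(g)/n\right)^{b}$, which need not be less than $1$ when $b$ is close to the minimal base size, and the Liebeck--Halasi--Mar\'oti bound behind Corollary~\ref{cor:base_size} is purely existential, so ``existence of one short base plus a Markov argument'' does not yield the success probability you need. Similarly, in part (a) you need a lower bound on the probability that exactly $t=\max\{2,\lceil\log n\rceil\}$ uniform random elements generate $G$; the result of Holt and Roney-Dougal that the paper quotes is the \emph{existence} of a generating set of that size, and your maximal-subgroup counting is only gestured at (it would require control of the number and indices of maximal subgroups that you do not establish). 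Moreover, a Las Vegas procedure with an unquantified per-trial success probability does not deliver the stated worst-case time bounds.

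What you overlooked is that the generous exponents in the statement are there precisely so that no randomness or probabilistic estimate is needed. Since $G$ is not large, Theorem~\ref{thm:maroti} gives $|G| < n^{1+\lfloor\log n\rfloor}$ (or $G$ is one of four Mathieu groups of bounded order), so one can deterministically enumerate \emph{all} $t$-tuples of elements of $G$ --- at most $|G|^{t} < 2^{2\log^{3}n}$ of them --- and test each for generation in polynomial time by Lemma~\ref{lem:complexity-compendium}(b); this proves (a). For (b) one enumerates all $n^{b}\le 2^{2\log^{2}n+26\log n}$ tuples of points and tests each for being a base in polynomial time. This exhaustive search is the paper's proof. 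Your probabilistic route could conceivably be completed by quoting genuine random-generation and random-base theorems, but as written it has a real gap, and it is in any case more elaborate than what the stated time bounds require.
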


\begin{proof}
(a) By  \cite{holt-RD} the group $G$ has such a generating set. 
The number of $t$-tuples of elements of $G$
is at most $|G|^t$. If $G$ is a Mathieu group then $|G|^t$ is bounded by
a constant, and otherwise by Theorem~\ref{thm:maroti} 
$$|G|^t < n^{t (1 + \lfloor \log n \rfloor)} <2^{2 t \log ^ 2 n} \leq  2^{2 \log ^ 3
  n}.$$ By Lemma~\ref{lem:complexity-compendium}(b), 
each such $t$-tuple can be tested for whether it generates
$G$ in polynomial time.
 
\medskip

\noindent (b)  By Corollary~\ref{cor:base_size}, such a $B$ exists.
There are  fewer than
$  n ^ {2 \lfloor \log n \rfloor + 26}   \leq  2 ^ {2\log ^ 2 n + 26 \log n}$
candidate $b$-tuples $B$ of elements from $\{1, \ldots, n\}$ to test. 
We can check, in polynomial time by
Lemma~\ref{lem:complexity-compendium}(b), whether $B$ is a
base.
\end{proof}

Our next result applies to all transitive groups, not just primitive
ones. 

\begin{thm}\label{thm:normsym-backtrack}
Let $G \leq \Sn$ be transitive. Assume that a generating set $X =
\{x_1, \ldots, x_t\}$ 
and  a base $B = (\beta_1, \ldots,
\beta_b)$  for $G$ are known. 
Then $\Norm_{\Sn}(G)$ can be computed 
in time $2^{O(tb\log n)}$.
\end{thm}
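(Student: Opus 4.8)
The plan is to run a backtrack search over $\Sn$, organised along the point-stabiliser chain $G = G^{(0)} \geq G^{(1)} \geq \cdots \geq G^{(b)} = 1$ determined by the base $B = (\beta_1, \ldots, \beta_b)$, but phrased so that we enumerate candidate images of $B$ rather than candidate permutations. The key observation is that an element $\sigma \in \Sn$ normalises $G$ if and only if $x_i^\sigma \in G$ for every generator $x_i \in X$, and that $\sigma$ is completely determined by where it sends $B$ once we know it normalises $G$: indeed, since $G$ is transitive, $\sigma$ maps $\beta_1$ to some point $\gamma_1$, and then $G^{(1)} = G_{\beta_1}$ must be conjugated by $\sigma$ into $G_{\gamma_1}$; recursing, $\sigma$ sends the orbit structure of the stabiliser chain to an isomorphic chain based at $(\gamma_1, \ldots, \gamma_b)$. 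So the search tree has nodes indexed by partial sequences $(\gamma_1, \ldots, \gamma_j)$ of distinct points, and at depth $j$ we must have found a coset representative moving $(\beta_1, \ldots, \beta_j)$ to $(\gamma_1, \ldots, \gamma_j)$ that is consistent with normalising $G$.

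First I would bound the size of the search tree. The candidates for the image $B^\sigma$ of the base form a subset of the $b$-tuples of distinct points, but we prune aggressively: having fixed $(\gamma_1, \ldots, \gamma_{j-1})$, the only admissible choices for $\gamma_j$ are the images of $\beta_j$ under elements of $G$ that already fix $(\gamma_1, \ldots, \gamma_{j-1})$-compatible data — more precisely, $\gamma_j$ must lie in an orbit of the relevant conjugated stabiliser. In fact the cleanest bound comes from noting that $\sigma$ permutes the orbits of each $G^{(j)}$, so the number of viable $(\gamma_1,\ldots,\gamma_b)$ is at most the product over $j$ of the number of orbits of $G^{(j)}$, which is crudely at most $n^b$ but more usefully is controlled by $|N_{\Sn}(G) : G_B^{\mathrm{something}}|$; I would just use the bound that there are at most $n$ choices at each of the $b$ levels, giving $n^b$ leaves, with $2^{O(b \log n)}$ bookkeeping. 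At each leaf we have a candidate $\sigma$ (computed via Lemma~\ref{lem:complexity-compendium}(c) in time $O(n^2)$ from the base and an on-the-fly strong generating set), and we verify $x_i^\sigma \in G$ for all $i$ in polynomial time using membership testing — this is where the factor $t$ enters, since each of the $t$ generators must be checked and this must be combined with recomputing orbits of conjugated subgroups, each such computation costing roughly $2^{O(t \log n)}$ when we only have a base-and-generators (not strong-generating) description to work from and must rebuild Schreier data. Multiplying the $n^b = 2^{O(b\log n)}$ leaves by the $2^{O(t\log n)}$ per-node cost gives $2^{O(tb\log n)}$.

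The main obstacle is organising the pruning so that the branching factor really is controlled and so that the "consistency of a partial image with normalising $G$" can be tested cheaply at internal nodes, rather than only rejecting at the leaves — without internal pruning one would enumerate all $n^b$ base images for essentially every transitive $G$, but that is already within the claimed bound $2^{O(tb\log n)}$, so strictly speaking the theorem does not require clever pruning, only a careful accounting. The genuinely delicate point is the per-node complexity: we are promised only a (possibly non-strong) generating set $X$ of size $t$ and a base $B$, so every time we pass to a point stabiliser or to a conjugate $G^\tau$ we must recompute orbit and transversal data from scratch, and we must ensure that these intermediate computations stay within $2^{O(t\log n)}$ — I would handle this by working throughout with the base $B$ as a fixed reference, conjugating it by the partial candidate and invoking Lemma~\ref{lem:complexity-compendium}(b),(c) to rebuild a strong generating set at each node, absorbing everything into the stated exponential bound. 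The output is then the subgroup of $\Sn$ generated by all the verified $\sigma$'s found at the leaves, which by construction is exactly $\Norm_{\Sn}(G)$.
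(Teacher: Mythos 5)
There is a genuine gap at the heart of your plan: the claim that a normalising element $\sigma$ is ``completely determined by where it sends $B$ once we know it normalises $G$'' is false. The tuple $B$ is a base for $G$, not for $\Norm_{\Sn}(G)$: if $\sigma_1,\sigma_2\in\Norm_{\Sn}(G)$ have the same base image then $\sigma_1\sigma_2^{-1}$ fixes $B$ pointwise, but it need not lie in $G$, so it need not be trivial. For example, with $G=\langle(1\,2\,3\,4\,5)\rangle\leq \mathrm{S}_5$ the tuple $(1)$ is a base for $G$, yet $\Norm_{\mathrm{S}_5}(G)$ is the Frobenius group of order $20$, whose stabiliser of $1$ has order $4$. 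Consequently a search tree whose leaves are the $n^b$ candidate base images cannot by itself determine candidate normalising permutations; worse, the concrete leaf step you describe --- using Lemma~\ref{lem:complexity-compendium}(c) to produce an element mapping $B$ to the guessed tuple --- only ever produces elements of $G$, so the algorithm as written would output a subgroup of $G$ rather than $\Norm_{\Sn}(G)$. Even if one could somehow produce one normalising element per achievable base image, one representative per coset of the pointwise stabiliser $\Norm_{\Sn}(G)_{(B)}$ need not generate $\Norm_{\Sn}(G)$, precisely because that stabiliser can be nontrivial.

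The missing idea, and the reason the factor $t$ appears in the exponent, is that one must guess more than the image of $B$: for each generator $x_i$ one also guesses the tuple $(\delta_{i1},\ldots,\delta_{ib})$, which (since $B$ is a base for $G$) pins down the element $y_i\in G$ that is to equal $x_i^\sigma$. This is the paper's enumeration over $n^{(t+1)b}=2^{O(tb\log n)}$ pairs $(A,D)$, where $A$ records $B^{\sigma^{-1}}$. Given such a guess, $\sigma$ \emph{is} uniquely determined, but the determination uses transitivity of $G$ and the relations $\mu^{x_i\sigma}=\mu^{\sigma y_i}$ to extend the partial map point by point to all of $\{1,\ldots,n\}$ in time $O(n^2t^2)$, after which one checks $x_i^\sigma=y_i\in G$. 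Your per-node cost accounting (a claimed $2^{O(t\log n)}$ for rebuilding Schreier data) is also off --- computing a strong generating set from any generating set is polynomial time by Lemma~\ref{lem:complexity-compendium}(b) --- and is a symptom of the same issue: in a correct argument the exponential factor $t$ comes from the extra guessing of the $y_i$, not from per-node group-theoretic computations.
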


\begin{proof}
Let $\Omega = \{1, \ldots, n\}$. 
We iterate over all possible choices of $A = 
(\alpha_1, \ldots, \alpha_b) \in \Omega^b$,
and $D= (\delta_{ij} \ : \ 1 \leq i \leq t, 1 \leq j \leq
b) \in \Omega^{bt}$. For each such $A$ and $D$ we proceed as follows. 

Let $\gamma_{ij} = \alpha_j^{x_i}$, and let $\overline{\sigma}: \alpha_j \mapsto
\beta_j$, $\gamma_{ij} \mapsto \delta_{ij}$. We first test whether
$\overline{\sigma}$ is a well-defined bijection from its domain $\Delta \subseteq
\Omega$ to its image $\Gamma \subseteq \Omega$. To do so, we
check
whether the $\alpha_i$ are pairwise distinct, whether
 $\gamma_{{i_1}{j_1}} = \gamma_{{i_2}j_2}$ if and only if 
$\delta_{i_1 j_1} = \delta_{i_2 j_2}$, and whether $\gamma_{ij} =
\alpha_k$ if and only if $\delta_{ij} = \beta_k$. 
If the answer is ever ``no'', then we  move on to the next choice of $A$ and
$D$. For a fixed $A$ and $D$, this step requires 
time $O(n^2t^2)$. 

If $\sigma \in \Sn$ satisfies $\sigma|_{\Delta} = \overline{\sigma}$, then 
$x_i^{\sigma}: \beta_j \mapsto \delta_{ij}$ for $1 \leq i \leq t$  and $1 \leq j \leq b$. 
Since $B$ is a base, for $1 \leq i \leq t$  there is at most one $y_i \in
G$ such that  $B^{y_i} = (\delta_{i1}, \ldots, \delta_{ib})$. 
The existence of such $y_i$, and their determination, can be calculated in
time $O(n ^ 2t)$ by Lemma~\ref{lem:complexity-compendium}(c).  If for
some $i$ no corresponding $y_i$ exists, then we move on to the next choice of
$A$ and $D$. 

We now show how to determine the (unique) $\sigma \in \Sn$ such that
$x_i^\sigma = y_i$ for $1 \leq i \leq t$, or show that no such
$\sigma$ exists. 
The identity $\mu ^ {x_i
  \sigma} = \mu ^ {\sigma y_i}$ must hold
for all $\mu \in \Delta$ and all $x_i \in X$.
Since $G$ is transitive,
if $\Delta \subsetneq \Omega$ we can find
$x_j \in X$ and $\mu \in \Delta$
with $\mu ^ {x_j} \not \in \Delta$.
Then 
$(\mu ^ {x_j})^\sigma = \mu^{\sigma y_j} =  \mu ^ {\overline \sigma y_j} 
\not\in \Gamma.$
Hence we can define and check the image of one more
point under $\sigma$
by examining at most $|\Delta||X| = O(nt)$ images of points under
permutations until we find a $\mu$ and $x_j$ as specified. This must
be carried out $O(n)$ times to fully specify $\sigma$, so the total time  is
$O(n^2t)$.

Hence, given $A$ and
$D$,  in time $O(n^2t^2)$
we can either
determine a corresponding element $\sigma \in \Norm_{\Sn}(G)$, or show that
no such element exists.
There are $n^{b + tb} = 2^{(t+1)b \log n}$ such
sequences of elements $A$ and $D$ to test.
Thus we can compute
$\Norm_{\Sn}(G)$ in time
$O(n ^ 2 t^2 \cdot 2^{(t+1)b \log n})
\subseteq 2 ^ {O(t \, b \log n)}$.
\end{proof}

Finally we present our main result for primitive groups that are not large.

\begin{thm}\label{thm:normsym-small-qpoly}
    Let $G = \gen X \leq \Sn$ be primitive and not large.
    Then we can compute $\Norm_\Sn(G)$ in time
    $|X| \cdot 2 ^ {O(\log ^ 3 n)}$.
\end{thm}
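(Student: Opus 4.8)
The plan is to combine the three quasipolynomial-time subroutines already assembled in this section. Since $G = \gen X \leq \Sn$ is primitive, we may first apply Lemma~\ref{lem:complexity-compendium}(a) to replace $X$ by a generating set of size at most $n$ in time $O(|X|n^2 + n^5)$; this accounts for the leading factor $|X|$ and lets us assume $\abs X \leq n$ henceforth. Next, by Lemma~\ref{lem:find_gen_set}(a), in time $2^{O(\log^3 n)}$ we compute a new generating set $\{x_1, \ldots, x_t\}$ for $G$ with $t \leq \max\{\log n, 2\}$, and by Lemma~\ref{lem:find_gen_set}(b), in time $2^{O(\log^2 n)}$ we compute a base $B = (\beta_1, \ldots, \beta_b)$ for $G$ with $b \leq 2\lfloor \log n \rfloor + 26$. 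Both bounds are available precisely because $G$ is primitive and not large, which is the hypothesis of the theorem.

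With a short generating set and a short base in hand, I would then invoke Theorem~\ref{thm:normsym-backtrack}: a primitive group is in particular transitive, so that theorem applies and computes $\Norm_\Sn(G)$ in time $2^{O(tb\log n)}$. Substituting $t = O(\log n)$ and $b = O(\log n)$ gives $tb\log n = O(\log^3 n)$, so this final step also runs in time $2^{O(\log^3 n)}$. Adding the costs of the three phases — $O(|X|n^2 + n^5)$ for the reduction, then $2^{O(\log^3 n)}$ twice more — and noting that $n^5 \leq 2^{O(\log^3 n)}$ absorbs the polynomial terms once $|X|$ is pulled out front, the total is $|X| \cdot 2^{O(\log^3 n)}$, as claimed.

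There is essentially no hard mathematical content left here: the theorem is a bookkeeping corollary that stitches together Lemma~\ref{lem:find_gen_set} and Theorem~\ref{thm:normsym-backtrack}. The only point requiring a moment's care is the arithmetic of the exponents — checking that the dominant term is genuinely $\log^3 n$ (coming from the $|G|^t$ search space in Lemma~\ref{lem:find_gen_set}(a), where $|G|^t < 2^{2\log^3 n}$ for non-Mathieu groups, and from $tb\log n$ in the backtrack search) and that all the polynomial-in-$n$ overheads are swallowed by $2^{O(\log^3 n)}$. One should also confirm that the $|X|$ factor arises only from the initial generating-set reduction and nowhere else, so that it can legitimately be factored out of the $O$-notation rather than appearing inside an exponent.
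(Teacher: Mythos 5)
Your proposal is correct and follows essentially the same route as the paper: reduce $X$ to at most $n$ generators, then use Lemma~\ref{lem:find_gen_set} (which rests on the primitive-and-not-large hypothesis) to obtain a generating set and base of size $O(\log n)$, and finish with the backtrack search of Theorem~\ref{thm:normsym-backtrack}, giving $2^{O(tb\log n)} = 2^{O(\log^3 n)}$. The paper additionally notes (via Lemma~\ref{lem:rec-large-groups}) that one can verify in polynomial time that $G$ is not large, but this is not needed for the statement as hypothesised, so your bookkeeping argument matches the intended proof.
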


\begin{proof}
By Lemma~\ref{lem:complexity-compendium}, in $O(|X|n^5)$ we can
replace $X$ by a generating set of size at most $n$, and in
polynomial time we can compute a
base and strong generating set for $G$. By
Lemma~\ref{lem:rec-large-groups} we can deduce that $G$ is small or
almost simple in polynomial time, and hence whether $G$ is not large.
The result now follows from
Lemma~\ref{lem:find_gen_set} and
Theorem~\ref{thm:normsym-backtrack}.
\end{proof}

\section{The normaliser problem for large groups}
\label{sec:recognising_large_groups}

Recall Definition~\ref{def:large} of a large primitive group.
 In this section we show that if $G$ is  large then 
we can construct $\Norm_{\Sn}(G)$ in polynomial time.

\begin{defn}\label{def:PA}
    Let $G \leq \Sn$ be primitive.
    The group  $G$ is of \emph{type  PA} if there exist an
    $\ell \geq 2$, an almost simple primitive group $A \leq
    \sym(\Delta)$ with socle $T$, 
    and a group
    $\myhat G \leq \sym(\Delta ^ \ell)$
    permutation isomorphic to $G$
    with
    \[
        \hspace{3em}
        \soc G \cong T ^ \ell 
        \leq
        \myhat G
        \leq A \wr \mathrm S_\ell,
    \]
    in product action on $\Delta ^ \ell$.
\end{defn}

From Definition~\ref{def:large}, 
 it is immediate that a large primitive group  is
either almost simple or of type PA.

\subsection{Constructing the normaliser of the socle}

In this subsection, we construct $\Norm_{\Sn}(\soc G)$
when $G$ is large and of type PA.  
We will require the following well known property: 
see \cite[Lemma 4.5A]{dixon-mortimer:permutation-groups}, for example.

\begin{lemma}\label{lem:properties-prim}
Let $G$ be primitive of type PA. With the notation of 
Definition~\ref{def:PA}, 
        $\Norm_{\sym(\Delta ^ \ell)} (\soc \myhat G) =  \Norm _
        {\sym(\Delta)} ( T ) \wr \mathrm S_\ell$, in product action. 
\end{lemma}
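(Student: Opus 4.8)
The plan is to prove the claimed equality of normalisers by a double inclusion, working entirely inside $\sym(\Delta^\ell)$ in product action, and reducing everything to standard facts about the structure of wreath products in product action. Throughout I write $W = \Norm_{\sym(\Delta)}(T) \wr \mathrm{S}_\ell$, acting in product action on $\Delta^\ell$, and $N = \soc \myhat G \cong T^\ell$ sitting diagonally in the base group $\sym(\Delta)^\ell$ of $\sym(\Delta^\ell) \cong \sym(\Delta) \wr \mathrm{S}_\ell$; here I use that $A \leq \sym(\Delta)$ is primitive almost simple with socle $T$, so $T^\ell$ is the direct product of the $\ell$ coordinate copies of $T$.

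First I would establish the easy inclusion $W \leq \Norm_{\sym(\Delta^\ell)}(N)$. An element of $W$ is a pair $(\bar{g}, \pi)$ with $\bar{g} = (g_1, \dots, g_\ell) \in \Norm_{\sym(\Delta)}(T)^\ell$ and $\pi \in \mathrm{S}_\ell$; its conjugation action on the base group $\sym(\Delta)^\ell$ permutes the coordinates according to $\pi$ and conjugates the $i$-th coordinate by $g_i$ (up to the index shift). Since each $g_i$ normalises $T$ and $\pi$ just permutes the identical factors $T$, the product $T^\ell$ is carried to itself; hence $W$ normalises $N$. This is a short computation with the product-action multiplication formula and needs no deep input.

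The substantive direction is $\Norm_{\sym(\Delta^\ell)}(N) \leq W$. Here the key structural fact is that $T^\ell$, as a subgroup of $\sym(\Delta)^\ell$, has $\ell$ minimal normal subgroups, namely the coordinate factors $T_1, \dots, T_\ell$ (this uses that $T$ is non-abelian simple, so $T^\ell$ is characteristically the direct product of its simple direct factors, and these are exactly the $T_i$). Any $\sigma \in \sym(\Delta^\ell)$ normalising $N$ therefore permutes the set $\{T_1, \dots, T_\ell\}$ by conjugation, which yields a homomorphism to $\mathrm{S}_\ell$. I would first treat the kernel: if $\sigma$ fixes every $T_i$ under conjugation, I want to show $\sigma$ lies in the base group $\sym(\Delta)^\ell$ and in fact in $\Norm_{\sym(\Delta)}(T)^\ell$. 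For the base-group claim I would invoke the description of how the support of each $T_i$ sits inside $\Delta^\ell$: the orbits of $T_i$ on $\Delta^\ell$ refine the $i$-th coordinate partition, and an element normalising all the $T_i$ must respect the resulting coordinatisation, forcing $\sigma = (g_1, \dots, g_\ell)$ coordinatewise; then $\sigma$ conjugating $T_i$ into itself forces $g_i \in \Norm_{\sym(\Delta)}(T)$. For a general $\sigma$, I would post-multiply by a suitable coordinate permutation (an element of the top group $\mathrm{S}_\ell \leq W$) realising the induced permutation of the $T_i$ — this is where I use $\mathrm{S}_\ell \leq W$ — to reduce to the kernel case, and conclude $\sigma \in W$.

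The main obstacle I anticipate is the careful bookkeeping in the kernel step: pinning down that an element of $\sym(\Delta^\ell)$ normalising each coordinate copy $T_i$ must already be ``diagonal'', i.e. must preserve the product-action coordinatisation and act coordinatewise. This is intuitively clear from the fact that the coordinate partition of $\Delta^\ell$ is recoverable from the family $\{T_i\}$ (e.g. as orbit partitions of products of all-but-one of the $T_i$, or via fixed-point sets), but making it rigorous requires a clean statement about how product action is determined by the socle — precisely the kind of fact encapsulated in \cite[Lemma 4.5A]{dixon-mortimer:permutation-groups} and in the general theory of Cartesian decompositions. Since the lemma is cited as ``well known'', in the write-up I would either quote that reference directly for the whole equality or give the double-inclusion argument above with the coordinatisation step justified by citing the structure of product action as in \cite[Section 4.5]{dixon-mortimer:permutation-groups}.
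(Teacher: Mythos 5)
Your proposal is correct in outline, but it is worth noting that the paper does not prove this lemma at all: it is quoted as a well-known property with a citation to \cite[Lemma 4.5A]{dixon-mortimer:permutation-groups}, so your self-contained double-inclusion argument is a genuinely different (more elementary, from-scratch) route. The containment $\Norm_{\sym(\Delta)}(T) \wr \mathrm{S}_\ell \leq \Norm_{\sym(\Delta^\ell)}(T^\ell)$ is indeed a routine computation, and your reduction of the reverse containment to the ``kernel'' case via the permutation of the simple direct factors $T_1,\dots,T_\ell$ (the minimal normal subgroups of $T^\ell$, since $T$ is non-abelian simple) followed by post-multiplication by an element of the top group is the standard way to make this rigorous. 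Two points deserve care in a write-up. First, the recovery of the coordinatisation should be phrased via the complementary products: the partition of $\Delta^\ell$ into $i$-th coordinate fibres is exactly the orbit partition of $\prod_{j\neq i} T_j$ (your parenthetical remark), and this uses that $T$ is transitive on $\Delta$, which holds because $T$ is the socle of the primitive group $A$; your main-text phrase that the orbits of $T_i$ ``refine the $i$-th coordinate partition'' has it the wrong way round, since those orbits are transverse to that partition and instead refine the $j$-th coordinate partitions for $j\neq i$. Second, once each coordinate partition is preserved, one should say explicitly that the induced permutations $g_i$ of $\Delta$ determine $\sigma$ because each point of $\Delta^\ell$ is the unique intersection of one class from each partition, which yields $\sigma=(g_1,\dots,g_\ell)$ in the base group, and then $g_i\in\Norm_{\sym(\Delta)}(T)$ follows from $T_i^\sigma=T_i$. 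With those details supplied your argument is a complete proof; what the citation buys the paper is brevity, while your argument buys independence from the reference and makes visible exactly where primitivity of $A$ (transitivity of $T$) and simplicity of $T$ are used.
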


\begin{lemma}\label{lem:normsym-soc-pa-poly}
   Let $G  = \langle X \rangle \leq \Sn$ be a large primitive group
    of type PA, with $|X| \leq n$. Then
    we can compute a generating set of size four for
   $\Norm_{\Sn}(\soc G)$ in polynomial time.
\end{lemma}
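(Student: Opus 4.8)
The plan is to reduce the computation of $\Norm_{\Sn}(\soc G)$ to computations with the almost simple group $A$ and the wreath structure of Lemma~\ref{lem:properties-prim}. First I would recognise the product-action structure. Using Lemma~\ref{lem:complexity-compendium}, replace $X$ by a generating set of size $\leq n$, verify primitivity, and compute generators for $N := \soc G$; since $G$ is large of type PA we know $N \cong T^\ell$ for some nonabelian simple $T$. The block structure of the product action on $\Delta^\ell$ can be recovered from $N$ in polynomial time: the minimal normal subgroups of $N$ that are direct factors give the $\ell$ copies of $T$, and the system of maximal blocks preserved by $N$ exhibits the bijection $\{1,\dots,n\} \to \Delta^\ell$; computing block systems, orbits, and minimal normal subgroups of a group acting on $n$ points are all polynomial-time tasks. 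This yields, in polynomial time, the identification $\Sn \cong \sym(\Delta^\ell)$, the integers $\ell$ and $|\Delta| = n^{1/\ell}$, and generators for the copy $\sym(\Delta)$-factor together with its socle $T \leq \sym(\Delta)$.

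Next I would invoke Lemma~\ref{lem:properties-prim}, which says $\Norm_{\sym(\Delta^\ell)}(N) = \Norm_{\sym(\Delta)}(T) \wr \Sym_\ell$ in product action. So it suffices to (i) compute $M := \Norm_{\sym(\Delta)}(T)$ and (ii) assemble the wreath product in product action on $n = |\Delta|^\ell$ points using a bounded number of generators. For (i): $T$ is a simple transitive group of degree $n^{1/\ell} \leq \sqrt n$, so its normaliser $M = \Norm_{\sym(\Delta)}(T)$ is an almost simple primitive group. The key point is that $M$ can be described by at most two generators (any almost simple group is $2$-generated), and $M$ can be constructed in polynomial time: one can compute $\Aut(T)$ acting on $\Delta$ by combining the faithful action on $T$'s point set with the known polynomial-time machinery for normalisers of simple groups (Lemma~\ref{lem:permiso-from-groupiso-poly} converts the abstract outer automorphisms into permutations of $\Delta$, since each is realised by a point-stabiliser-preserving isomorphism), and $M$ is the image of this action. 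Then for (ii), the base group $M^\ell$ is generated by $\ell$ "diagonal-free" conjugates of two generators of $M$ — but in fact one can do better: $M^\ell$ is generated by two elements $(g,1,\dots,1)$ and $(h,1,\dots,1)$ together with the $\ell$-cycle $(1\,2\,\cdots\,\ell) \in \Sym_\ell$ acting on the coordinates, and adding one transposition from $\Sym_\ell$ if $\ell \geq 3$; all of these are explicit permutations of the $|\Delta|^\ell$ points under product action, computable in polynomial time. This gives a generating set of size four (two from $M$, two from $\Sym_\ell$), which one may need to massage — e.g. when $\ell = 1$ is excluded by hypothesis, and small $\ell$ handled directly — to land at exactly four generators.

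The main obstacle I anticipate is step (i): constructing $\Norm_{\sym(\Delta)}(T)$ — equivalently $\Aut(T)$ in its action on $\Delta$ — in genuine polynomial time, with an explicit bounded-size generating set. The containment $T \leq \Norm_{\sym(\Delta)}(T) \leq \Aut(T)$ is clear, and since $T$ is self-normalising in its own automorphism group the quotient is $\Out(T)$, which is small (solvable, with composition length $O(\log\log|T|)$ or bounded for alternating/sporadic and governed by field and graph automorphisms for Lie type); but realising a chosen generator of $\Out(T)$ as a concrete permutation of $\Delta$ requires knowing a permutation isomorphism, and the cleanest route is: compute $\Aut(T)$ abstractly (polynomial time by the CFSG-based algorithms of Babai–Kantor–Luks and Kantor–Luks for computing in quotient groups and constructing automorphism groups of simple groups), check via Lemma~\ref{lem:permiso-from-groupiso-poly}(a) that the relevant automorphisms preserve a point stabiliser of $T$ on $\Delta$ (they do, since $\Norm_{\sym(\Delta)}(T)$ acts on $\Delta$), and then apply Lemma~\ref{lem:permiso-from-groupiso-poly}(b) to turn them into permutations. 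Once $M$ is in hand as an explicit permutation group on $\Delta$ with $\leq 2$ generators, the remaining assembly of the wreath product in product action is routine bookkeeping, and the final size-four generating set for $\Norm_{\Sn}(\soc G)$ follows. Throughout, every intermediate object has degree at most $n$ and every step is one of the polynomial-time primitives catalogued in Lemmas~\ref{lem:complexity-compendium}--\ref{lem:permiso-from-groupiso-poly}, so the total running time is polynomial in $n$.
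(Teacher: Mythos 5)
Your high-level plan (reduce to the wreath structure of Lemma~\ref{lem:properties-prim} and write down four generators) matches the paper, but the crucial step (i) has a genuine gap. You construct $M=\Norm_{\sym(\Delta)}(T)$ by ``computing $\aut(T)$ acting on $\Delta$'', justified by the claim that the relevant automorphisms preserve a point stabiliser ``since $\Norm_{\sym(\Delta)}(T)$ acts on $\Delta$''. That is circular, and false as stated: by Lemma~\ref{lem:permiso-from-groupiso-poly}(a) an automorphism of $T$ is induced by a permutation of $\Delta$ only if it maps point stabilisers to point stabilisers, and in general the image of $\Norm_{\sym(\Delta)}(T)$ in $\aut(T)$ is a \emph{proper} subgroup, so ``$\aut(T)$ acting on $\Delta$'' is not even well defined. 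This is exactly the delicate point in the paper: since $G$ is large (a hypothesis you never use), $T$ is $\Alt_m$ acting on $k$-subsets, so $\Norm_{\sym(\Delta)}(T)=\Sym_{k,m}$ can simply be written down with two explicit generators, with no automorphism-group computation; but even then, transferring the standard copy back to $\{1,\ldots,n\}$ requires an isomorphism from $\soc G$ to $\Alt_{k,m}^\ell$ that maps point stabilisers to point stabilisers, and when $m=6$ an arbitrary isomorphism need not do so, because $\Sym_{k,6}$ induces a subgroup of index $2$ in $\aut(\Alt_{k,6})$ (so index $2^\ell$ for the direct power). The paper repairs this by searching the $2^\ell\le n$ correcting automorphisms in an explicit elementary abelian group $L$; your proposal asserts the condition holds automatically, which fails precisely in this case. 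Moreover, your fallback of computing $\aut(T)$ abstractly via CFSG-based Babai--Kantor--Luks/Kantor--Luks machinery, and the claim that this yields $M$ with two generators in polynomial time, is not supported by the toolkit the paper sets up (Lemmas~\ref{lem:complexity-compendium}--\ref{lem:permiso-from-groupiso-poly}) and is left entirely unargued.

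A secondary gap is the assembly step you call routine bookkeeping. Recovering the Cartesian decomposition from $\soc G$ (via the direct factors and the orbits of their complements) is plausible and can be made polynomial time, but it gives $\ell$ different block sets $\Delta_1,\ldots,\Delta_\ell$; to form $\Norm_{\sym(\Delta)}(T)\wr\Sym_\ell$ in product action you must identify them with a single $\Delta$ so that all $\ell$ copies of $T$ become the \emph{same} subgroup of $\sym(\Delta)$, otherwise the coordinate-permuting generators you add do not normalise your base group. Constructing these identifications is exactly constructing the permutation isomorphism $(f,\varphi)$ that the paper obtains from Lemma~\ref{lem:permiso-from-groupiso-poly}, and it is where the point-stabiliser issue above bites. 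The paper's route is different in detail: it maps $G$ isomorphically into $\Sym_m\wr\Sym_\ell$ in its imprimitive action on $m\ell$ points (Babai--Luks--Seress), restricts to socles to get $\iota\from\soc G\to\Alt_{k,m}^\ell$, corrects $\iota$ by an element of $L$ when $m=6$, produces $f$ by Lemma~\ref{lem:permiso-from-groupiso-poly}(b), and then pulls back the four explicit generators of $\Sym_{k,m}\wr\Sym_\ell$ through $f^{-1}$. Until your step (i) and the identification step are repaired along such lines, the proposed proof does not go through.
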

\begin{proof}
Let $S = \soc G$. Since $G$ is large and of type PA, there exist an $\ell \geq 2$, $m$ and $k$ 
such that $S$ 
is permutation isomorphic to $\Alt_m ^ \ell$ acting component-wise on
the set of $\ell$-tuples of $k$-element subsets of $\{1, \ldots,
m\}$. Let $\Delta$ be the set of $k$-subsets of $\set{1, \ldots, m}$, and
write $\Alt_{k, m}$ and $\Sym_{k, m}$ for  the actions of $\Alt_m$ and
$\Sym_m$ on $\Delta$. Let $\Gamma = \Delta^{\ell}$, and let
$W= \Sym_{k, m} \wr \Sym_{\ell} \leq \sym(\Gamma)$, in product
action. Let $A:= \soc W = \Alt_{k, m}^{\ell} \leq \sym(\Gamma)$. 
Then $S$ is permutation isomorphic to $A$. We shall proceed by first
constructing the permutation isomorphism, and then using it to
construct $\Norm_{\Sn}(S)$ as the pre-image of
$\Norm_{\sym(\Gamma)}(A)$. It is immediate from Lemma~\ref{lem:properties-prim}
that $\Norm_{\sym(\Gamma)}(A) = W$.

We first compute a base and strong generating set for $G$, and find
generators for,  and the  composition factors of, $S$.
Hence we determine $\ell$ and $m$, and
calculate $k$ from $n = {m \choose k} ^ \ell$. This can all be done in polynomial time
by Lemma~\ref{lem:complexity-compendium}.

We shall now proceed in three steps.  
First we will compute an isomorphism $\iota$ from $S$ to 
$A$, and generators for $W = \Norm_{\sym(\Gamma)}(A)$. 
Secondly, we will use $\iota$ to construct a permutation isomorphism $(f,
\varphi)$ from $S$ to $A$. Finally, we shall use $f$ to construct
generators for $\Norm_{\Sn}(S)$. 

Let
$W_1 \leq \Sym_{ml}$ be the wreath product of $\Sym_m$ and
$\Sym_\ell$, in imprimitive action. We generate $W_1$ with a set
$Z$ of size four: two permutations
acting non-trivially only on the first block, to generate one copy
of $\Sym_m$, and two permutations generating the top group $\Sym_{\ell}$. 
Note that $m\ell \leq \abs \Delta \ell \leq n$, so $Z$ can be written down in
polynomial time. 
By \cite[Theorem 4.1]{babai-luks-seress:permutation-groups-nc} we can compute
in polynomial time an  isomorphism
$\iota_{1, 1} \from G \to K$
for some $K \leq W_1$. 
Notice that $\soc K = \soc W_1$.
We can write down the corresponding two generators for the action of $\Sym_{k,
  m}$ on $\Delta$, and lift their action 
on $\Delta$ to an action on
$\Gamma = \Delta ^ \ell$ by letting them act trivially on all but the first components of
$\Gamma$.
Then we write down two generators for the action of $\Sym_\ell$
on the components of $\Gamma$, corresponding to those in $Z$.
 Let $Y$ be this set of four permutations. 
Mapping $Z$ to $Y$ yields an  isomorphism by generator images
$\iota_{2, 1} \from W_1
\to
W = \Sym_{k,m} \wr \Sym_\ell \leq \sym(\Gamma)$.
Since $W$ acts on $\abs \Delta ^ \ell = n$ points we can 
compute and evaluate $\iota_{2, 1}$ in polynomial time, by 
Lemma~\ref{lem:comp-with-homs-poly}.
We let $\iota_1$ and
$\iota_2$ be the restrictions of $\iota_{1, 1}$ and $\iota_{2,
1}$ to the socles of the groups concerned:
we can compute
the socles and restrictions in polynomial time
by Lemma~\ref{lem:complexity-compendium}
and Lemma~\ref{lem:comp-with-homs-poly}, respectively.
Finally, we let $\iota = \iota_1 \iota_2$ be
the isomorphism from $S$ to $A$ given by the 
composition of these maps.

The groups $A$ and $S$ are 
permutation isomorphic: denote one such permutation isomorphism by 
\begin{equation}\label{eq:fixed_isom}
(p, \psi), \mbox{ with } \psi \from A \to S, \quad p \from
\Delta^{\ell} \to \{1,
\ldots, n\}, 
\end{equation} and notice that $\psi \iota \in \aut(A)$.

We shall now use $\iota$ to construct a 
permutation isomorphism $(f, \varphi)$ from $S$ to $A$. By
Lemma~\ref{lem:permiso-from-groupiso-poly}(b), it suffices to find a
suitable $\varphi$, as then $f$ can be constructed in polynomial
time. By Lemma~\ref{lem:permiso-from-groupiso-poly}(a), 
such an $f$ exists if and only if $\varphi$ maps the point stabilisers
of $S$ to the point stabilisers of $A$.

To determine $\varphi$, we first 
consider the homomorphism:
\[
    \Psi \from W \to \aut(A),~
    x \mapsto (g \mapsto x ^ {-1} \! g x).
\]
Recall that $\Norm_{\sym(\Gamma)}(A) = W$.
Thus $\mathrm{Im}(\Psi)$ consists of exactly those automorphisms
of $A = \Alt_{k,m} ^ \ell$ which are induced by permutations of $\Gamma$.

Assume first that $m = 5$ or $m \geq 7$, so that $\Psi$ is surjective.
In this case we shall define $\varphi := \iota$. To see that
$\varphi$ maps point stabilisers to point
stabilisers, let $\psi$ be as in \eqref{eq:fixed_isom}, so that
$\psi\iota = \psi \varphi \in \aut(A)$. 
Then there exists an element
$w \in W \leq \sym(\Gamma)$ with $w\Psi = \psi \varphi$. 
Hence $\psi \varphi$ permutes the point stabilisers of $A$, and so
  by  Lemma~\ref{lem:permiso-from-groupiso-poly}(a) the map 
$\varphi$ induces a permutation isomorphism.

Now let $m = 6$, so that  the image of $\Sym_{k,6}$ under
$\Psi_0 \from \Sym_{k,6} \to \aut(\Alt_{k,6})$ has index 2 in $\aut(\Alt_{k,6})$.
Correspondingly the index $[\aut (A) : W \Psi] = 2^\ell$.
In constant time we can compute generator images for 
an involutory 
automorphism $\tau \in \aut(\Alt_{k, 6}) \setminus (\Sym_{k,6}\Psi_0)$. 
Hence we can compute generator images for $\tau_1, \ldots, \tau_\ell
\in \aut(A)$,  where each $\tau_i$ induces $\tau$ on a distinct
direct factor of $A$,  and the identity on the other factors. Notice that
$L:= \langle \tau_1 \ldots, \tau_\ell \rangle \cong C_2^{\ell}$, so
that $|L| = 2^{\ell}  \leq
n$.

Let $(p, \psi)$ be  as in \eqref{eq:fixed_isom}, so that $\psi
\iota \in \aut(A)$. 
Then there exists a $\theta \in L$ such that
$\psi  \iota \theta \in W \Psi$.
In particular, 
 $\psi  \iota \theta$  maps point stabilisers to point stabilisers, and hence 
by  Lemma~\ref{lem:permiso-from-groupiso-poly}(a) 
$\iota \theta$ does so too. 
We can test in polynomial time whether the image of a point stabiliser
is a point stabiliser, so we can
 check each of the $O(n)$ elements of $L$ to find such a $\theta$ in
polynomial time. We then set $\varphi = \iota \theta$.

Finally, the bijection $f^{-1}$ induces an isomorphism
from $\sym(\Gamma)$ to $\Sn$ that maps
$W = \Sym_{k, l} \wr \Sym_\ell$
to the normaliser $\Norm_\Sn(S)$. Recall the set $Y$ of generators for
$W$. We can compute
$Y{f^{-1}}$ in polynomial time, and $\gen{Yf^{-1}} = \Norm_{\Sn}(S)$. 
\end{proof}

\subsection{Computing the normaliser in $\Sn$ of a large group}
\label{subsec:the_strategy_for_giants}

Many of our results in this subsection apply directly to all
primitive groups $G$ that are either almost simple or of type PA.
Our approach yields a
``very moderately'' quasipolynomial time algorithm: $O(2^{\log n \log
  \log n})$. 

Throughout this subsection, let $G \leq \Sn$, and let $M
= \Norm_{\Sn}(\soc G)$. We first relate the complexity of
computing $\Norm_\Sn(G)$ to the index $[M : G]$.  

\begin{lemma}\label{lem:normsym-if-norm-socle}
    Let $G = \gen X \leq \Sn$ be primitive, with $|X| \leq n$. Assume
    that a base and strong generating set for $G$ are known. 
    Furthermore, let a generating set $Y$ for 
    $M$ be given, with $|Y| \leq n$. 
    Then we can compute the normaliser $\Norm_\Sn(G)$ in time
    $O(n^3 [M:G]^2)$.
\end{lemma}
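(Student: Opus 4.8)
The plan is to exploit the containment $G \le M = \Norm_\Sn(\soc G)$, which holds because $\soc G$ is characteristic in $G$. Since $G$ is primitive, $\soc G \ne 1$, and moreover $G$ normalises $\soc G$, so $G \le M$. The key structural fact is that $\Norm_\Sn(G) \le \Norm_\Sn(\soc G) = M$ as well, again because $\soc G$ is characteristic in $G$: any $\sigma$ normalising $G$ must send $\soc G$ to $\soc(G^\sigma) = \soc G$. Thus the whole computation takes place \emph{inside} $M$, and $\Norm_\Sn(G) = \Norm_M(G)$.

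First I would enumerate a transversal for $G$ in $M$. Since a base and strong generating set for $G$ are known, membership testing in $G$ costs $O(n^2)$ (this is part of Lemma~\ref{lem:complexity-compendium}), and running a standard orbit/transversal computation on the cosets of $G$ in $M$ — using the generating set $Y$ for $M$ — produces coset representatives $m_1, \ldots, m_r$ with $r = [M:G]$ in time $O(n^3 [M:G])$ roughly (each of the $[M:G]$ cosets, times $|Y| \le n$ generators, times the $O(n^2)$ cost of a membership/identification test to decide which coset a product lands in). Next, for each representative $m_i$, test whether $m_i \in \Norm_\Sn(G)$: conjugate each of the (at most $n$) strong generators of $G$ by $m_i$ and check membership back in $G$; this is $O(n \cdot n^2) = O(n^3)$ per representative, so $O(n^3 [M:G])$ in total. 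Finally, $\Norm_\Sn(G) = \Norm_M(G)$ is generated by $G$ together with those $m_i$ that passed the test, so we output the strong generators of $G$ together with the surviving coset representatives.

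The one subtlety — and the place where the stated $O(n^3 [M:G]^2)$ bound (rather than $O(n^3[M:G])$) presumably comes from — is that we must actually \emph{produce} the transversal efficiently, and naively the orbit algorithm on cosets may need to test products $m_i \cdot y$ against the already-found representatives $m_1, \ldots, m_j$ to decide whether a new coset has appeared; with a membership test in $G$ this is one test per pair, but without a good data structure for "which coset is this" one may pay an extra factor of $[M:G]$ for the lookup, giving $O(n^3[M:G]^2)$. I would phrase the argument to allow this slack rather than optimise it. The main obstacle, then, is not conceptual — the reduction $\Norm_\Sn(G) = \Norm_M(G)$ is immediate from $\soc G$ being characteristic — but bookkeeping the transversal enumeration and the per-coset normaliser test so that the total is cleanly $O(n^3[M:G]^2)$; everything else (conjugating generators, membership in $G$ via the strong generating set, assembling the output generating set) is routine and covered by Lemma~\ref{lem:complexity-compendium}.
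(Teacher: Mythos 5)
Your proposal is correct and follows essentially the same route as the paper: reduce to $\Norm_M(G)$ since $\Norm_\Sn(G)$ normalises $\soc G$, enumerate coset representatives of $G$ in $M$ by an orbit algorithm with $O(n^2)$ membership tests (the pairwise coset comparisons being exactly where the $[M:G]^2$ factor arises, as you note), and then test each representative by conjugating the generators of $G$ and checking membership. No substantive differences from the paper's argument.
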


\begin{proof}
First notice that  $\soc G$ is normal in $\Norm_\Sn(G)$, and so
 $\Norm_\Sn(G)$ is a subgroup of $M$.
   Since  $G \leq M$, to compute $\Norm_{\Sn}(G)$  it suffices to test
    only the representatives of the
    right cosets $G \backslash M$ of $G$ in $M$.

    We shall compute the 
    representatives of $G \backslash  M$
    by a standard orbit algorithm.
    To determine equality of cosets $Ga$ and $Gb$ we test whether
    $ab^{-1} \in G$, in time $O(n ^ 2)$ by
    Lemma~\ref{lem:complexity-compendium}(c).
    The cost of the computation of these coset representatives is then the product of
    the number of generators $\abs Y$,
    the size of the orbit $[M : G]$,
    and the cost
    $[M : G] \cdot n ^ 2$
    for testing whether a representative of a newly computed coset
    $(Ga)y$ for $y \in Y$
    is contained in one of the up to $[M : G]$ already computed cosets.
    In total this takes
    $O(\abs Y \cdot n ^ 2 \cdot [M : G] \cdot [M : G])  = O(n^3 [M:G]^2)$ time.

    Next, for $h$  a coset representative, we can test whether $x^h \in
    G$ for all $x \in X$ in time $O(n ^ 3)$.
    We can therefore test all such representatives $h$ in time
    $O(n ^ 3 \cdot [M \nobreak : \nobreak G])$.
    The result follows. 
\end{proof}

Luks and Miyazaki showed  in \cite[Corollary
3.24]{luks-miyazaki:normalisers-ptime} 
that normalisers in $\Sn$ of non-abelian simple groups can be
computed in polynomial time. We very slightly extend their argument, 
to almost simple groups: the following result does not require transitivity.

\begin{lemma}\label{lem:normsym-as-type-poly}
    Let $G = \gen X \leq \Sn$ be almost simple, with
    $|X| \leq n$. 
    Then we can compute $\Norm_{\Sn}(G)$ in polynomial time.
\end{lemma}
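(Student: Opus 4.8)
The plan is to reduce the almost simple case to the simple case already treated by Luks and Miyazaki, and to handle the reduction using the structure of the automorphism tower of a non-abelian simple group. Write $S = \soc G$, so that $S$ is non-abelian simple and $S \leq G \leq \aut(S)$, with $S$ characteristic in $G$. Then any element of $\Norm_{\Sn}(G)$ normalises $S$, so $\Norm_{\Sn}(G) \leq \Norm_{\Sn}(S)$; conversely $\Norm_{\Sn}(G)$ is precisely the subgroup of $\Norm_{\Sn}(S)$ consisting of those permutations that also normalise $G$. So the first step is to invoke \cite[Corollary 3.24]{luks-miyazaki:normalisers-ptime} (via Lemma~\ref{lem:complexity-compendium}(b) to extract generators for $S$ and verify it is simple) to compute $N := \Norm_{\Sn}(S)$ in polynomial time, together with a generating set of polynomial size.

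The key structural observation is that $[\aut(S) : S]$ is bounded — by the Schreier conjecture, $\out(S)$ is solvable and, crucially for complexity purposes, $|\out(S)|$ is small relative to $|S|$ (indeed $|\out(S)| = O(\log^c|S|)$ for an absolute constant $c$, and certainly $|\out(S)|$ is polynomially bounded in $n$ since $|S| \leq |\Sn|$). Now, conjugation gives a homomorphism $N \to \aut(S)$ whose kernel is $C_{\Sn}(S)$; since $S$ is transitive on each of its orbits with trivial centre, $C_{\Sn}(S)$ is itself controlled (on the support of $S$ it is trivial, and off the support it is a full symmetric group, which we can write down). The images of both $G$ and $N$ in $\out(S) \cong \aut(S)/S$ are then subgroups of a group of polynomially-bounded order, and $\Norm_{\Sn}(G)$ is the preimage in $N$ of the normaliser of the image of $G$ inside the image of $N$. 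So the second step is: compute the (polynomial-size) finite group $\overline{N} = N S / S \leq \out(S)$ explicitly as a permutation or multiplication-table group, locate the subgroup $\overline{G} = GS/S = G/S$ inside it, compute $\Norm_{\overline{N}}(\overline{G})$ by brute force over the $|\overline{N}| = \mathrm{poly}(n)$ elements, and pull this back through $N \to \overline{N}$ using the membership and quotient-computation tools of Lemma~\ref{lem:complexity-compendium} and Lemma~\ref{lem:comp-with-homs-poly}.

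The main obstacle is making the passage to the quotient $\out(S)$ genuinely constructive and polynomial-time: one must produce a concrete handle on $N/C_{\Sn}(S) \hookrightarrow \aut(S)$ and on the subgroup $S$ inside it, so that cosets and normalisers in the finite group $\overline{N}$ can be manipulated — this is exactly the kind of "computing in quotient groups" that Kantor and Luks \cite{kantor-luks:computing-in-quotient-groups} make available, and combined with the fact that $[N : C_{\Sn}(S) S] = [\overline{N} : 1] \cdot \text{(bounded)}$ is polynomially bounded, the brute-force search over $\overline{N}$ is affordable. Everything else — extracting $S$, computing $C_{\Sn}(S)$ (trivial on $\mathrm{supp}(S)$, symmetric group off it), verifying conjugates $x^h \in G$ for the finitely many coset representatives $h$ — is routine by the cited lemmas. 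I would also note that transitivity of $G$ is not needed: $S$ may have several orbits, but the argument above works orbit-wise, which is why Luks–Miyazaki's statement and ours are phrased without a transitivity hypothesis.
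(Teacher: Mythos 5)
Your overall strategy (compute $N := \Norm_{\Sn}(S)$ for $S = \soc G$ via Luks--Miyazaki, then exploit the smallness of $\out(S)$) is in the same spirit as the paper, but your central reduction step is false as stated. You claim that $\Norm_{\Sn}(G)$ is the full preimage in $N$ of $\Norm_{\overline N}(\overline G)$, where bars denote images in $\out(S)$. The kernel of the map $N \to \out(S)$ is $S\, C_{\Sn}(S)$ (not $S$, as your notation $\overline N = NS/S$ suggests), so that preimage is $\Norm_N\bigl(G\, C_{\Sn}(S)\bigr)$; an element $h$ of it only satisfies $G^h \leq G\, C_{\Sn}(S)$, i.e.\ $G^h$ is \emph{some} complement to $C_{\Sn}(S)$ containing $S$, not necessarily $G$ itself. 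Concretely, let $n = 8$, let $S = \Alt_5$ act on $\{1,\dots,5\}$, and let $G = \gen{S,\ (1\,2)(6\,7)} \cong \Sym_5$, an almost simple group with socle $S$. Then $c = (6\,8)$ centralises $S$, so $c \in N$ and $c$ has trivial image in $\out(S)$, hence lies in your preimage; but $\bigl((1\,2)(6\,7)\bigr)^c = (1\,2)(7\,8) \notin G$, so $c \notin \Norm_{\Sn}(G)$. This also refutes your closing assertion that transitivity costs nothing because ``the argument works orbit-wise''. Your final remark about verifying $x^h \in G$ for ``the finitely many coset representatives'' does not repair the gap: whether one chosen lift of $\bar h$ normalises $G$ says nothing about the rest of its coset modulo $S\,C_{\Sn}(S)$, and searching inside that coset means searching over $C_{\Sn}(S)$, which is not polynomially bounded in general (it contains the symmetric group on the points outside the support of $S$, and more when $S$ has several equivalent orbits); finding a normalising element there is a conjugacy-of-complements problem that the tools you cite do not obviously solve in polynomial time.

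Your reduction does become correct precisely when $C_{\Sn}(S)$ is trivial (for instance when $G$ is primitive almost simple, the only case used later in the paper), since then $N$ embeds in $\aut(S)$ and the preimage of $\Norm_{N/S}(G/S)$ really is $\Norm_N(G)$; but the lemma is stated without any transitivity hypothesis, so this case analysis cannot be skipped. The paper proceeds differently and avoids both quotient-group machinery and the preimage identity: it computes $C_{\Sn}(G)$ and, following the argument of \cite[Corollary 3.24]{luks-miyazaki:normalisers-ptime}, an almost simple $A$ realising the automorphisms of $T = \soc G$ induced by $\Norm_{\Sn}(T)$, sets $M = \gen{C_{\Sn}(G), A} = \Norm_{\Sn}(T)$, bounds $[M:G] \leq \kappa\sqrt n$ using the Guralnick--Mar\'oti--Pyber bound $\abs{\out T} \leq \kappa\sqrt n$, and then enumerates the cosets of $G$ in $M$ and tests representatives via Lemma~\ref{lem:normsym-if-norm-socle}. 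The quantitative content your proposal is missing is exactly such a polynomial bound on the index of $G$ in the group you actually search; without it, controlling the contribution of $C_{\Sn}(S)$ is the whole difficulty, not a routine afterthought.
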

\begin{proof}
By Lemma~\ref{lem:complexity-compendium}(b), we can compute a base and
strong generating set for $G$, and generators for
$T:= \soc G$,  in polynomial time. 

By \cite[Lemma 3.3]{luks-miyazaki:normalisers-ptime} we can compute
$C:= C_{\Sn}(G)$, and then arguing as in the proof of 
\cite[Corollary 3.24]{luks-miyazaki:normalisers-ptime} we can
determine in polynomial time which elements of $\aut T$ are induced by
$N_{\Sn}(T)$, and construct an almost simple group $A$ with socle $T$ which
induces all of these automorphisms. 
Let $M = \langle C, A \rangle$. 
Then $M = \Norm_{\Sn}(T)$.
By \cite[Lemma 7.7]{guralnick-maroti-pyber:normalisers-primtive-groups},  there
exists an absolute constant $\kappa$ such that
$\abs{\out T} \leq \kappa\sqrt n$.
Hence $[M : G] \leq \kappa \sqrt n$. The result is now immediate
from Lemma~\ref{lem:normsym-if-norm-socle}.
\end{proof}

Next we consider the normalisers of the socles of groups of type PA. 

\begin{prop}\label{prop:pa-small-index-socle-in-norm}
    There exists a constant $c$ such that for all 
    groups $G \leq \Sn$ of type PA
    \[
        [ M : \soc G ]
        \leq
        2 ^ {c \log n \log \log n}.
    \]
\end{prop}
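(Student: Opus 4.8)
The plan is to bound $[M:\soc G]$ where $M = \Norm_{\Sn}(\soc G)$, using the explicit structure of a group of type PA together with the structure of the normaliser of its socle given by Lemma~\ref{lem:properties-prim}. Write, up to permutation isomorphism, $\soc G \cong T^\ell \leq G \leq A \wr \Sym_\ell$ in product action on $\Delta^\ell$, where $A \leq \sym(\Delta)$ is almost simple with socle $T$, and set $d = |\Delta|$, so $n = d^\ell$. By Lemma~\ref{lem:properties-prim} we have $M = \Norm_{\sym(\Delta)}(T) \wr \Sym_\ell$, hence
\[
    [M : \soc G] = [\Norm_{\sym(\Delta)}(T) : T]^\ell \cdot \ell!.
\]
So the task reduces to two estimates: a bound on $[\Norm_{\sym(\Delta)}(T):T]$ raised to the power $\ell$, and the bound $\ell! \leq 2^{O(\log n \log\log n)}$.

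First I would handle the factor $\ell!$. Since $T$ is non-abelian simple and acts on $\Delta$ with $|\Delta| = d \geq 5$, we have $d \geq 5$ and so $\ell \leq \log_5 n = O(\log n)$; thus $\log \ell! \leq \ell \log \ell = O(\log n \log\log n)$, which is exactly the shape claimed. Next I would bound the other factor. Since $A$ is almost simple with socle $T$, and $A$ is primitive on $\Delta$ of degree $d$, we have $\Norm_{\sym(\Delta)}(T) = C_{\sym(\Delta)}(T) \rtimes (\text{something inducing }\Aut T)$; more simply, $\Norm_{\sym(\Delta)}(T)/T$ embeds in $\Out T$ times the centraliser, and by \cite[Lemma 7.7]{guralnick-maroti-pyber:normalisers-primtive-groups} (as used in the proof of Lemma~\ref{lem:normsym-as-type-poly}) one has $[\Norm_{\sym(\Delta)}(T):T] \leq \kappa \sqrt{d}$ for an absolute constant $\kappa$ — indeed the centraliser is trivial here since $T$ is transitive and non-regular, so really $[\Norm_{\sym(\Delta)}(T):T] = |\Out T \text{ induced}| \leq |\Out T| \leq \kappa\sqrt d$. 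Raising to the $\ell$-th power gives $[\Norm_{\sym(\Delta)}(T):T]^\ell \leq (\kappa \sqrt d)^\ell = \kappa^\ell d^{\ell/2} = \kappa^\ell \sqrt{n}$, and $\kappa^\ell = 2^{\ell\log\kappa} = 2^{O(\log n)}$, so this whole factor is at most $2^{O(\log n)} \cdot \sqrt n \leq 2^{O(\log n)}$ — in fact polynomial in $n$.

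Combining, $[M:\soc G] \leq 2^{O(\log n)} \cdot 2^{O(\log n \log\log n)} = 2^{O(\log n \log\log n)}$, as required. The main obstacle is really just being careful about what "type PA" gives us: one must invoke the constraint $d \geq 5$ (so that $\ell = O(\log n)$) and cite Lemma~\ref{lem:properties-prim} to identify $M$ precisely, rather than only knowing $M \leq \Norm_{\sym(\Delta)}(T)\wr\Sym_\ell$; with $M$ pinned down exactly the index factorises cleanly and the two pieces are both routine given \cite{guralnick-maroti-pyber:normalisers-primtive-groups}. A minor subtlety worth a sentence is checking that Definition~\ref{def:PA} forces $A$ primitive on $\Delta$ and hence $T$ transitive, so that the permutation degree $d = |\Delta|$ controls $|\Out T|$ via the cited lemma; this is immediate since a large group of type PA has $A \leq \Sym_m$ acting on $k$-subsets, which is primitive.
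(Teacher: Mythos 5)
Your proposal is correct and follows essentially the same route as the paper: identify $M \cong \Norm_{\sym(\Delta)}(T)\wr\Sym_\ell$ via Lemma~\ref{lem:properties-prim}, bound $\ell \leq \log n$, use the Guralnick--Mar\'oti--Pyber bound $\abs{\out T}\leq \kappa\sqrt{\abs\Delta}$, and combine $(\kappa\sqrt{\abs\Delta})^{\ell}\cdot \ell!\leq 2^{O(\log n\log\log n)}$. Your explicit remarks on the trivial centraliser and on $\abs\Delta\geq 5$ are slight elaborations of points the paper leaves implicit, not a different argument.
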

\begin{proof}
Let $T \leq \sym \Delta$ be a non-abelian simple group such that
$S:= \soc G$ is permutation isomorphic to $T ^ \ell \leq \sym \Delta ^
\ell$. 
Then
$M  \cong \Norm_{\sym \Delta}(T) \wr \mathrm{S}_\ell$, by 
Lemma~\ref{lem:properties-prim}.
Now 
$\ell = \log_{\abs \Delta} n \leq \log n$
and
by
\cite{guralnick-maroti-pyber:normalisers-primtive-groups}
there exists a constant $\kappa$ such that 
$\abs{\out T} \leq \kappa \sqrt{ \abs \Delta }
= \kappa n^{1/(2\ell)}$. Let $\lambda = \log \kappa$. 
Then
\begin{align*}
    [ M : S ]
    &\leq
    \abs{ \out T } ^ \ell \abs{\mathrm S_\ell} \leq
    \left(  \kappa n^{1/(2\ell)} \right) ^ \ell
    \cdot \ell ^ \ell \leq
    2^{\lambda \log n} \cdot n^{1/2} 
    \cdot \left( \log n \right) ^ {\log n} \\
& \leq
    2 ^ { \lambda \log n + (1/2)\log n +  \log n \log \log n} = 2^{c \log n \log \log n},
\end{align*}
for a suitable choice of $c$. 
\end{proof}

\begin{prop}\label{prop:norm_large_PA}
Let $G = \gen X  \leq \Sn$ be primitive of type PA, with
$|X| \leq n$. If a generating set $Y$ of size at most $n$ is known for
$M$, 
then we can compute the normaliser $\Norm_\Sn(G)$ in time $2^{O(\log n \log \log n)}$. 
\end{prop}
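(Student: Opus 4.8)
The plan is to reduce the computation of $\Norm_\Sn(G)$ to an application of Lemma~\ref{lem:normsym-if-norm-socle}, whose running time is $O(n^3 [M:G]^2)$. So the only thing to check is that the hypotheses of that lemma are satisfied in time $2^{O(\log n \log\log n)}$, and that the resulting bound on $[M:G]$ is of the right order. First I would, using Lemma~\ref{lem:complexity-compendium}, replace $X$ by a generating set of size at most $n$ and compute a base and strong generating set for $G$; this is polynomial time. The generating set $Y$ for $M$ of size at most $n$ is given by hypothesis. So Lemma~\ref{lem:normsym-if-norm-socle} applies and yields $\Norm_\Sn(G)$ in time $O(n^3[M:G]^2)$.

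It remains to bound $[M:G]$. Since $G$ is of type PA we have $\soc G \leq G \leq M$, so $[M:G]$ divides $[M:\soc G]$, and Proposition~\ref{prop:pa-small-index-socle-in-norm} gives $[M:\soc G] \leq 2^{c\log n\log\log n}$. Hence $[M:G] \leq 2^{c\log n\log\log n}$ as well, and
\[
    O\bigl(n^3 [M:G]^2\bigr)
    \subseteq
    O\bigl( 2^{3\log n} \cdot 2^{2c\log n\log\log n}\bigr)
    \subseteq
    2^{O(\log n\log\log n)}.
\]
Combining the polynomial-time preprocessing with this bound gives the claimed total running time of $2^{O(\log n\log\log n)}$.

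There is no serious obstacle here: the proposition is essentially a bookkeeping corollary that packages Lemma~\ref{lem:normsym-if-norm-socle} and Proposition~\ref{prop:pa-small-index-socle-in-norm} together, with the only mild point being the observation that $[M:G] \mid [M:\soc G]$ because $\soc G$ is a normal subgroup of $G$ contained in $M$ (indeed $M = \Norm_\Sn(\soc G)$ contains $G$ since $G$ normalises its own socle). The genuine work — constructing a generating set $Y$ for $M = \Norm_\Sn(\soc G)$ of size at most $n$ in the allowed time — is not part of this statement, since $Y$ is assumed given; that construction is handled separately (for type PA by Lemma~\ref{lem:normsym-soc-pa-poly}, which in fact produces a four-element generating set in polynomial time, so feeding its output into this proposition is immediate).
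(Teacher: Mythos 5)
Your proof is correct and follows essentially the same route as the paper: compute a base and strong generating set in polynomial time, invoke Lemma~\ref{lem:normsym-if-norm-socle} with the given $Y$, and bound $[M:G]$ via $[M:G]\leq[M:\soc G]$ together with Proposition~\ref{prop:pa-small-index-socle-in-norm}. The only difference is that you make explicit the (correct, tower-law) step $[M:G]\mid[M:\soc G]$, which the paper leaves implicit.
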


\begin{proof}
By Lemma~\ref{lem:complexity-compendium}(b) we can compute a base
and strong generating set for $G$
 in polynomial time. 
Given this data and $Y$, by
Lemma~\ref{lem:normsym-if-norm-socle} we can compute $\Norm_{\Sn}(G)$ in
time $O(n^3[M:G]^2)$.
 By Proposition~\ref{prop:pa-small-index-socle-in-norm} the
 index $[M:G] \leq 2^{c \log n \log \log n}$ for some constant $c$.  
 The
result follows.
\end{proof}

The following corollary completes the proof of
Theorem~\ref{thm:normsym-qpoly}. 

\begin{cor}\label{cor:normsym-pa-qpoly}
    Let $G = \gen X \leq \Sn$ be a large primitive group.
    Then the  normaliser $\Norm_\Sn(G)$ can be computed in time
    $\abs X \cdot 2 ^ {O(\log n \log \log n)}$.
\end{cor}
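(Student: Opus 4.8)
The plan is to split into the two exhaustive cases provided by the remark immediately after Definition~\ref{def:PA}: a large primitive group is either almost simple or of type PA. First I would use Lemma~\ref{lem:complexity-compendium}(a) to replace $X$ by a generating set of size at most $n$ in time $O(|X|n^5)$, and then compute a base and strong generating set, check primitivity, and find generators for $\soc G$ in polynomial time by Lemma~\ref{lem:complexity-compendium}(b). Using Lemma~\ref{lem:rec-large-groups} I can decide in polynomial time whether $G$ is almost simple; if so, Lemma~\ref{lem:normsym-as-type-poly} computes $\Norm_\Sn(G)$ in polynomial time and we are done well within the claimed bound.

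Otherwise $G$ is of type PA. Here the key input is a generating set of size at most $n$ for $M = \Norm_\Sn(\soc G)$: Lemma~\ref{lem:normsym-soc-pa-poly} provides one (indeed of size four) in polynomial time. With that in hand, Proposition~\ref{prop:norm_large_PA} computes $\Norm_\Sn(G)$ in time $2^{O(\log n \log\log n)}$. Combining the polynomial-time preprocessing with the two cases, the total running time is $|X| \cdot 2^{O(\log n \log\log n)}$ (absorbing the $O(|X|n^5)$ term and all polynomial contributions into the stated bound, since $n^5 = 2^{O(\log n)}$), which gives the corollary.

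I do not expect a genuine obstacle here, as the corollary is essentially a bookkeeping assembly of the preceding results — the only point requiring a little care is confirming that the case distinction (almost simple versus type PA) is exhaustive for large groups and algorithmically decidable in polynomial time, both of which are already supplied (by the remark after Definition~\ref{def:PA} and by Lemma~\ref{lem:rec-large-groups}, respectively). The mild subtlety is simply to note that the $|X|$ factor is needed only for the initial generating-set reduction, after which every step depends on $n$ alone; once this is observed the bound $\abs X \cdot 2^{O(\log n \log\log n)}$ follows immediately from Lemmas~\ref{lem:normsym-soc-pa-poly} and~\ref{lem:normsym-as-type-poly} and Proposition~\ref{prop:norm_large_PA}.
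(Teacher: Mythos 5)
Your proposal is correct and follows essentially the same route as the paper: reduce the generating set, decide almost simple versus type PA, handle the almost simple case by Lemma~\ref{lem:normsym-as-type-poly}, and in the PA case feed the size-four generating set for $\Norm_\Sn(\soc G)$ from Lemma~\ref{lem:normsym-soc-pa-poly} into Proposition~\ref{prop:norm_large_PA}. No substantive differences.
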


\begin{proof}
By Lemma~\ref{lem:complexity-compendium} we can replace $X$ by a set
of at most $n$ generators in $O(|X|n^2 + n^5)$, and we can determine
whether $G$ is almost simple in polynomial time.  If $G$ is almost
simple, then we can compute $\Norm_{\Sn}(G)$ in polynomial time, by
Lemma~\ref{lem:normsym-as-type-poly}. Otherwise, $G$ is of type PA.  By
Lemma~\ref{lem:normsym-soc-pa-poly} we can construct a generating set
of size four for $M:= \Norm_{\Sn}(\soc G)$ in polynomial time. 
Hence by Proposition~\ref{prop:norm_large_PA}  we can compute
$\Norm_{\Sn}(G)$ in time $2^{O(\log n \log \log n)}$. 
\end{proof}

\paragraph{Acknowledgements} The second author is 
supported by DFG research training group ``Experimental and
  Constructive Algebra''. The first author would like to thank the
  Isaac Newton Institute for Mathematical Sciences for support and
  hospitality during the programme ``Groups, Representations and
  Applications: New perspectives'', when work on this paper was undertaken. This work was supported by:
EPSRC grant number EP/R014604/1.  We thank Professor Babai for drawing our
  attention to the relationship between the string isomorphism problem
  and the Luks class.

\bibliographystyle{plain}
\bibliography{./references}


\end{document}